\def\k{\kern .5em}
\def\er{\kern .2em}
\begin{document}

\date{}
\author{}
\newcommand{\be}{\begin{equation}}
\newcommand{\ee}{\end{equation}}
\newcommand{\ba}{\begin{array}}
\newcommand{\ea}{\end{array}}
\newcommand{\beas}{\begin{eqnarray*}}
\newcommand{\eeas}{\end{eqnarray*}}
\newcommand{\bea}{\begin{eqnarray}}
\newcommand{\eea}{\end{eqnarray}}
\newcommand{\ome}{\Omega}

\newtheorem{theorem}{Theorem}[section]
\newtheorem{lemma}{Lemma}[section]
\newtheorem{remark}{Remark}[section]
\newtheorem{proposition}{Proposition}[section]
\newtheorem{definition}{Definition}[section]
\newtheorem{corollary}{Corollary}[section]

\newtheorem{theo}{Theorem}[section]
\newtheorem{lemm}{Lemma}[section]
\newcommand{\blem}{\begin{lemma}}
\newcommand{\elem}{\end{lemma}}
\newcommand{\bthe}{\begin{theorem}}
\newcommand{\ethe}{\end{theorem}}
\newtheorem{prop}{Proposition}[section]
\newcommand{\bprop}{\begin{proposition}}
\newcommand{\eprop}{\end{proposition}}
\newtheorem{defi}{Definition}[section]
\newtheorem{coro}{Corollary}[section]
\newtheorem{algo}{Algorithm}[section]
\newtheorem{rema}{Remark}[section]
\newtheorem{property}{Property}[section]
\newtheorem{assu}{Assumption}[section]
\newtheorem{exam}{Example}[section]

\renewcommand{\theequation}{\arabic{section}.\arabic{equation}}
\renewcommand{\thetheorem}{\arabic{section}.\arabic{theorem}}
\renewcommand{\thelemma}{\arabic{section}.\arabic{lemma}}
\renewcommand{\theproposition}{\arabic{section}.\arabic{proposition}}
\renewcommand{\thedefinition}{\arabic{section}.\arabic{definition}}
\renewcommand{\thecorollary}{\arabic{section}.\arabic{corollary}}
\renewcommand{\thealgorithm}{\arabic{section}.\arabic{algorithm}}
\newcommand{\lan}{\langle}
\newcommand{\curl}{{\bf curl \;}}
\newcommand{\rot}{{\rm curl}}
\newcommand{\grad}{{\bf grad \;}}
\newcommand{\dvg}{{\rm div \,}}
\newcommand{\ran}{\rangle}
\newcommand{\bR}{\mbox{\bf R}}
\newcommand{\bRn}{{\bf R}^3}
\newcommand{\Coinf}{C_0^{\infty}}
\newcommand{\disp}{\displaystyle}
\newcommand{\ra}{\rightarrow}
\newcommand{\Ra}{\Rightarrow}
\newcommand{\ud}{u_{\delta}}
\newcommand{\Ed}{E_{\delta}}
\newcommand{\Hd}{H_{\delta}}
\newcommand\varep{\varepsilon}
\newcommand{\RNum}[1]{\uppercase\expandafter{\romannumeral #1\relax}}
\newcommand{\tabincell}[2]{\begin{tabular}{@{}#1@{}}#2\end{tabular}}
\title{The Existence the Solution of Nonlinear Discrete Schemes and Convergence of a Linearized Iterative Method for time-dependent PNP Equations}
\author{*
	\and *
	\and *
}
\author{Yang Liu $^{1}$
	\and Shi Shu $^{2,*}$
	\and Ying Yang $^{3}$
}
\footnotetext[1]
{School of Mathematics and Computational Science, Xiangtan University,
	Xiangtan, 411105, P.R. China. E-mail: liuyang@smail.xtu.edu.cn
}

\footnotetext[2]
{$^{*}$\textbf{Corresponding author.} School of Mathematics and Computational Science, Xiangtan University,
	Xiangtan, 411105, P.R. China. E-mail: shushi@xtu.edu.cn }

\footnotetext[3]
{School of  Mathematics and Computational Science, Guangxi Colleges and Universities Key Laboratory of Data Analysis and Computation, Guangxi Applied Mathematics Center (GUET), Guilin University of Electronic Technology, Guilin, 541004, Guangxi, P.R. China. E-mail: yangying@lsec.cc.ac.cn }
%
%

\maketitle

\noindent
{\bf Abstract}\quad 
We establish the existence theory of several commonly used finite element (FE) nonlinear fully discrete solutions, and the convergence theory of a linearized iteration. First, it is shown for standard FE, SUPG and edge-averaged method respectively that the stiffness matrix is a column M-matrix under certain conditions, and then the existence theory of these three FE nonlinear fully discrete solutions is presented by using Brouwer's fixed point theorem.
Second, the contraction of a commonly used linearized iterative method--Gummel iteration is proven, and then the convergence theory is established for the iteration. At last, a numerical experiment is shown to verifies the theories.

\noindent
{\bf Keywords}:  Poisson-Nernst-Planck equations, finite element method, Gummel iteration, existence theory and convergence theory, SUPG method, edge-averaged method

\noindent
{\bf AMS(2000) subject classifications}\quad 65N15, 65N30.

\section{Introduction}\label{sec1}
\noindent

\vskip 0.3cm
The Poisson-Nernst-Planck (PNP) equations, which are coupled by the Poisson equation and the Nernst-Planck equation, were first proposed by Nernst \cite{nernst1889elektromotorische} and Planck \cite{planck1890ueber}. They are often used to describe the ion mass conservation and electrostatic diffusion reaction process, and have been widely used in the numerical simulations of biological ion channel \cite{tu2013parallel,wang2021stabilized,eisenberg1998ionic,singer2009poisson}, semiconductor devices \cite{markowich1985stationary,van1950theory,brezzi2005discretization,miller1999application}, and nanopore systems \cite{xu2018time,daiguji2004ion}.

Due to the strong coupling and nonlinearity, it is difficult to find the analytic solution except a few cases. The finite element (FE) method has been applied to solve PNP equations and is popular since it is flexible and adaptable in dealing with the irregular interface. Comparing with the plenty of work in the FE computation of PNP equations (see e.g. \cite{J.W.Jerome1991, lu2007electrodiffusion, lu2010poisson, xie2020effective,  zhang2022class}) , the theoretical analysis of FE method seems limited, especially for the existence of the discrete solution. Prohl and Schmuck \cite{prohl2009convergent} propose two classes of FE schemes for the time-dependent PNP equations and show the existence and uniqueness, and convergence of the FE solutions. In recently years, some work on error analyis has been appeared for the FE solution of PNP equations. For example, Yang and Lu  \cite{yang2013error} presented some error bounds for a piecewise FE approximation to the steady-state practical PNP problems and gave serveral numerical examples including biomolecular problems to support the analyisis. The a priori error estimates of both semi- and fully discrete FE approximation for time-dependent PNP equations are presented in \cite{sun2016error}, in which the optimal convergence order in $L^{\infty}(H^1)$ and $L^2(H^1)$ norms are obtained with a linear element FE discretization. The error estimates in $H^1$ norms for the FE approximation to the nonlinear PNP equations are shown in \cite{yang2020superconvergent}. The superconvergence results are also presented for this model by using the gradient recovery technique, which are successfully applied to improve the efficiency of the Gummel iteration for a practical ion channel problem. Recently, a generilized FE method--virtual element method is applied to solve PNP equations on arbitrary polygons or polyhedrons, the error estimates of which are shown in \cite{liu22, degh23}.
\vskip 0.3cm
There exist problems such as the poor stability or the bad approximation when using standard FE method to discretize pratical PNP equations. Therefore, some improved FE methods have emerged. The SUPG method is applied  to a class of modified PNP equations in ion channel in \cite{tu2015stabilized} to improve the robustness and stability of the standard FE algorithm. A new stable FE method--SUPG-IP was proposed in \cite{SUPG-IP} for the steady-state PNP equations in ion channel, which has better robustness than the standard~FE and~SUPG methods. The inverse average finite element method was constructed for a class of steady-state PNP equations in nanopore systems in \cite{zhang2021inverse}, which effectively solves the problems of non physical pseudo oscillations caused by convection dominance. In \cite{zhang2022class}, four FE methods with averaging techniques were used to discrete
steady-state PNP equations in semiconductor, which is more stable than standard FE method.
Although these improved FE method have show good efficiency in the computation of PNP equations, the theoretical analysis for the discrete solution is very limited. To our knowedege, there is no theoretical result for the existence of the discrete solution for the improved FE method.


In this paper, one of the main contributions is that the existence theory of the solution to a type of FE nonlinear discrete system
is established for the time-dependent PNP equations. The type of FE schemes includes three commonly used one: standard FE, SUPG, and edge-averaged finite element (EAFE) schemes, the nonlinear discrete systems of which have a unified expression form. We strictly prove that the coefficient matrix of the unified discrete system is a column M-matrix under certain weak conditions.  
To establish the existence theory of the solution, a suitable compact convex set needs to be careful constructed, and Brouwer fixed point theorem and mathematical induction are also applied.

\vskip 0.3cm
The above nonlinear discrete systems are usually linearized by iterative methods such as~Gummel or~Newton iteration. For example, in \cite{mathur2009multigrid}, the Newton linearization is used for the adaptive finite volume discrete system of PNP equation on unstructured grid.
 Xie and Lu \cite{xie2020effective} use the Slotboom transformation to transform the PNP equations with periodic boundary conditions into equivalent equations, and give an acceleration method for the Gummel iteration. In this paper, we present the Gummel iteration for the time-dependent PNP equations based on the EAFE scheme. The contraction and convergence theories are established for this iteration, which is another main constribution in this paper. The results of the contraction and convergence theories can be easily generalized to other linearized iterations such as Newton iteration. Since PNP equations are a strong nonlinear coupled system, the analysis to the solution of iteration needs careful treatment of the special nonlinear term.

 The rest of this paper is organized as follows. In Section 2, we introduce the time-dependent PNP equations and three commonly used FE descrete schemes. In Section 3, first we present the corresponding nonlinear discrete algebraic system for the three FE schems. Then, we show the existence of the solutions to the three FE nonlinear fully discrete schemes. After that, we present the Gummel iteration combining with EAFE scheme and estiblish the contraction and convergence theories for the iteration. A numerical example is also reported in this section to verifies the contraction and convergence theories for the Gummel iteration. Finally, some conclusions are made in Section 4.

 \vskip 0.3cm

\setcounter{lemma}{0}
\setcounter{theorem}{0}
\setcounter{corollary}{0}
\setcounter{equation}{0}
\setcounter{remark}{0}
\section{ The continuous and discrete problems}
Let $\Omega \subset \mathbb{R}^{d}$ $(d=2,3)$ be a bounded Lipschitz domain. We adopt the standard notations for Sobolev spaces $W^{s,p}(\Omega)$ and
their associated norms and seminorms. For $p=2$, denote by
$H^{s}(\Omega)=W^{s,2}(\Omega)$ and $H_0^1(\Omega)=\{v\in H^{1}(\Omega):v|_{\partial\Omega}=0\}$. For simplicity, let $\|\cdot\|_{s}=\|\cdot\|_{W^{s,2}(\Omega)}$ and $\|\cdot\|=\|\cdot\|_{L^2(\Omega)}$. We use $(\cdot,\cdot)$ to denote the standard $L^{2}$-inner product.
\vspace{3mm}
\subsection{PNP equations}

 Consider the following time-dependent PNP equations (cf. \cite{shen2020decoupling})
 \begin{equation}\label{pnp equation}
 \left\{\begin{array}{lr}
 -\Delta\phi-\sum\limits^{2}_{i=1}q^{i}p^{i}=f, & \text { in } \Omega,~\text{for}~t\in(0, T], \vspace{1mm}\\
 {\partial_t p^{i}}-\nabla\cdot(\nabla{p^{i}}+ q^{i}p^{i}\nabla\phi)=F^i, & \text { in } \Omega,~\text{for}~t\in(0, T], i=1,2,\vspace{1mm}
 \end{array}\right.
 \end{equation}
 with the homogeneous Dirichlet boundary conditions
 \begin{equation}\label{boundary}
 \left\{
 \begin{array}{l}
 \;\phi=0,\;\text{on}\;\partial\Omega,~\text{for}~t\in(0, T],\vspace{0.5mm}\\
 \;p^i=0,\;\text{on}\;\partial\Omega,~\text{for}~t\in(0, T],  i=1,2,
 \end{array}
 \right.
 \end{equation}
 where  $p^i,~i=1,2$ represents the concentration of the $i$-th ionic species, $p_t^{i}=\frac{\partial p^i}{\partial t}$, $\phi$ denotes the electrostatic potential, the constant $q^i$ is the charge of the species $i$, and $f$ and $F^{i}$ are the reaction source terms. Denote the initial concentrations and potential by $p^{i}_0,~\phi_{0},~i=1,2$.


 The weak formulation of (\ref{pnp equation})-(\ref{boundary}) is that:
 find $p^i\in L^2(0,T;H_0^1(\Omega))\cap L^{\infty}(0,T;L^{\infty}(\Omega))$, $i=1,2$, and $\phi(t)\in H_0^1(\Omega)$ such that
 \begin{eqnarray}
 \label{weak1con}
 (\nabla\phi,\nabla w)-\sum\limits^{2}_{i=1}q^{i}(p^{i},w)=(f,w),~~\forall w\in H_0^1(\Omega), \\
 \label{weak2con}
 (\partial_tp^i,v)+(\nabla{p^{i}},\nabla v)+ (q^{i}p^{i}\nabla\phi,
 \nabla v)=(F^i,v), ~~\forall v\in H_0^1(\Omega), ~i=1,2.
 \end{eqnarray}

  The existence and uniqueness of the solutious to (\ref{weak1con})-(\ref{weak2con}) have been presented in \cite{gajewski1986basic} for $F^i=R(p^1,p^2)=r(p^1,p^2)(1-p^1p^2)$.
  	Here $p^1,~p^2$ represent the densities of mobile holes and electrons respectively in a semiconductor device and $R(p^1,p^2)$ is the net recombination rate (see \cite{gajewski1986basic}). The function $r: R_+^2\rightarrow R_+$ are required to be Lipschitzian and $p^1,~p^2\in W^{0,\infty}(\Omega)$.

 \subsection{Three discrete schemes}
In this subsection, we introduce three commonly used FE schemes for PNP equations including standard FE, SUPG  and EAFE schemes.

 Suppose $\mathcal{T}_h = \{ K \}$ is a  partition of $\Omega$, where $K$ is the element and $h=\max\limits_{K\in\mathcal{T}_h} \{h_K\}$, $h_K=\mbox{diam}~K$. Define the linear finite element space as follows
 \begin{eqnarray}
 V_h =\{v \in H^1(\Omega): v|_{\partial\Omega}=0~\mbox{and}~v|_K \in \mathcal{P}_1(K), ~~\forall K\in \mathcal{T}_h\},
 \end{eqnarray}
 where $\mathcal{P}_1(K)$
 denotes the set of all polynomials with the degree no more then $1$ on the element $K$.  Denote the set of basis function vector in $V_h$ by
 \begin{eqnarray}\label{Psi}
 \Psi=(\psi_{1},\ldots,\psi_{{n_h}}),
 \end{eqnarray}
where ${n_h}=\dim(V_h )$.

  The standard semi-discrete finite element formulation corresponding to \eqref{weak1con}-\eqref{weak2con} is as follows: find $p^{i}_h,~i=1,2$ and $\phi_h\in V_h$, such that
 \begin{eqnarray}
 \label{semiweak2}
 (\nabla\phi_h,\nabla w_h)-\sum\limits^{2}_{i=1}q^{i}(p^{i}_h,w_h)=(f,w_h),~~\forall w_h\in V_h, \\
 \label{semiweak1}
(\partial_tp^i_h,v_h)+(\nabla{p^{i}_h},\nabla v_h)+ (q^{i}p^{i}_h\nabla\phi_h,
 \nabla v_h)=(F^i,v_h), ~~\forall v_h\in V_h,~i=1,2.
 \end{eqnarray}

 In order to present the full discretization of (\ref{pnp equation})-\eqref{boundary},
 define a partition $0<t^0<t^1<\cdots t^N=T$ with time step $\tau=\max\{t^n-t^{n-1},n=1,2,\cdots N\}$. Also for any $u$, denote by
 \begin{align*}
 u^n({\bf x})=u({\bf x},t^n)
 \end{align*}
 and
 $$D_\tau u^{n+1} =\frac{u^{n+1}-u^{n}} { \tau} , ~\mbox{for}~ n=0,1,2,\cdots,N-1.$$
Next three common used linear finite element discretizations are introduced. First, the fully discrete FE approximation for  (\ref{pnp equation})-\eqref{boundary} is: find
$p_h^{i,n+1}\in V_h,~i=1,2$ and $\phi_h^{n+1}\in V_h$, such that
{\small
\begin{align} \label{fulldis2-FEM}
 (\nabla\phi_h^{n+1},\nabla w_h)-\sum\limits_{i = 1}^2 q^i(p_h^{i,n+1},w_h)
 &= (f^{n+1 },w_h),  \forall w_h\in V_h,\\\label{fulldis-FEM}
 (D_\tau p_h^{i,n+1},v_h)+(\nabla p_h^{i,n+1},\nabla v_h)+(q^ip_h^{i,n+1}\nabla\phi^{n+1}_h,\nabla v_h)
 &= (F^{i,n+1},v_h), \forall v_h\in V_h,
 \end{align}}
where $f^{n+1 }=f (t^{n+1},\cdot)  $, $F^{i,n+1}=F^i(t^{n+1},\cdot),~i=1,2$.

Note that the accurate solution to convection dominated equations often has internal or exponential boundary layers, and the SUPG scheme is a commonly used numerical method to overcome numerical oscillations caused by the boundary layers. The fully discrete SUPG scheme for (\ref{pnp equation})-\eqref{boundary} is as follows:
find $p_h^{i,n+1}\in V_h,~i=1,2$ and~$\phi_h^{n+1}\in V_h$, such that
{
	\begin{eqnarray} \label{fulldis-SUPG-temp}
	&& (\nabla\phi^{n+1}_h,\nabla w_h)-\sum\limits_{i = 1}^2 q^i(p_h^{i,n+1},w_h)
	= (f^{n+1 },w_h),~\forall w_h\in V_h ,
	\\ \notag
	&&(D_\tau p_h^{i,n+1},v_h)+(\nabla{p_h^{i,n+1}},\nabla v)+ (q^{i}p_h^{i,n+1}\nabla\phi_h^{n+1},
	\nabla v_h) \\&+&\sum\limits_{K\in \mathcal T_{h }}(-\nabla\cdot(\nabla{p_h^{i,n+1}}
	+ q^{i}p_h^{i,n+1}\nabla\phi_h^{n+1}),-q^i C_K(x)  \nabla {\phi}_h^{n+1} \cdot \nabla v_h )_K\notag\\&=&(F^{i,n+1},v_h)+\sum\limits_{K\in \mathcal T_{h}}(F^{i,n+1}-D_\tau p_h^{i,n+1},-q^i C_K(x) \nabla {\phi}_h^{n+1}\cdot \nabla v_h  )_K,~\forall v_h\in V_h,
	\label{fulldis2-SUPG-temp}
	\end{eqnarray}}
where
$C_K(x)=\left\{\begin{array}{ll}
\frac{\tilde \tau h_K }{2\| {q^i\nabla \phi^{n+1}_h}\|_{L^{\infty}( K)}}, & \mbox { if} ~P_{K} \geq 1, ~\\
\frac{\tilde \tau h_K ^{2}}{4  }, & \mbox {if } ~P_{K}<1, ~
\end{array}\right. \forall  x \in K$,
with $P_{K}=\frac{h_K \| {q^i\nabla \phi_h^{n+1}}\|_{L^{\infty}(K)}}{2 }$. Here {
$\sum\limits_{K\in \mathcal T_{h }}(-\nabla\cdot(\nabla{p_h^{i,n+1}}+ q^{i}p_h^{i,n+1}\nabla\phi^{n+1}_h),-q^i C_K(x)  \nabla {\phi}_h^{n+1} \cdot \nabla v_h )_K$} and {
$\sum\limits_{K\in \mathcal T_{h}}(F^{i,n+1}-D_\tau p_h^{i,n+1},-q^i C_K(x) \nabla {\phi}_h^{n+1}\cdot \nabla v_h  )_K$} are stabilization term. Since $\nabla\cdot (\nabla p_h ^{i,n+1})=0$ and $ \nabla\cdot(\nabla\phi_h^{n+1} )=0$ for linear element discretization, the equations ~\eqref{fulldis-SUPG-temp}-\eqref{fulldis2-SUPG-temp} can be written as
{\small
	\begin{eqnarray} \label{fulldis-SUPG}
	&& (\nabla\phi_h^{n+1},\nabla w_h)-\sum\limits_{i = 1}^2 q^i(p_h^{i,n+1},w_h)
	=   (f^{n+1},w_h),~\forall w_h\in V_h ,\\\notag
	&&(D_\tau p_h^{i,n+1},v_h)+(\nabla{p_h^{i,n+1}},\nabla v)+ (q^{i}p_h^{i,n+1}\nabla\phi_h^{n+1},
	\nabla v_h)+\sum\limits_{K\in \mathcal T_{h }}(-  q^{i}\nabla p_h^{i,n+1}\cdot \nabla\phi_h^{n+1} ,-q^i C_K(x)  \nabla {\phi}^{n+1}_h \cdot \nabla v_h )_K\\&=&(F^{i,n+1},v_h)+\sum\limits_{K\in \mathcal T_{h}}(F^{i,n+1}-D_\tau p_h^{i,n+1},-q^i C_K(x) \nabla {\phi}^{n+1}_h\cdot \nabla v_h  )_K,~\forall v_h\in V_h.\label{fulldis2-SUPG}
	\end{eqnarray}}


 Another commonly used method to deal with
 dominated convection is EAFE method. It has been used to solve NP equations (cf. \cite{zhang2021inverse}).
  In order to present the EAFE scheme for PNP equations, for any element $K$ with the given number $k$, suppose $E$ is the edge with the endpoints $x_{k_\nu}$ and $x_{k_\mu}$, where ${k_\nu}$ is the whole number corresponding to the local number $\nu$. Let $\tau_E= x_{k_\nu}-x_{k_\mu},~{k_\nu} <{k_\mu} $ for any $E$ in $\mathcal T_h$.


 The EAFE fully discrete scheme for PNP equations are as follows: find
 $p_h^{i,n+1}\in V_h ,~ i=1,2$ and $\phi_h^{n+1}\in V_h $, satisfying
{\small \begin{align}
 \label{fulldis2-EAFE}
(\nabla\phi_h^{n+1},\nabla w_h)&-\sum\limits_{i = 1}^2 q^i(p_h^{i,n+1},w_h)
=   (f^{n+1 },w_h),~\forall w_h\in V_h ,\\
(D_\tau p_h^{i,n+1},v_h)+ & \sum\limits_{K\in \mathcal{T}_{h}}[\sum\limits_{E\subset K}\omega_E^K\tilde{\alpha}^{K,i}_{E}(\phi_h^{n+1})\delta_E(e^{q^i\phi_h^{n+1}}p_h^{i,n+1})\delta_Ev_{h}]
 =  \sum\limits_{K\in \mathcal{T}_{h}}(F^{i,n+1},v_h)_K,  ~\forall v_h\in V_h , \label{fulldis-EAFE}
 \end{align}}
 where
{\small\begin{equation}\label{omega}
 	\tilde{\alpha}^{K,i}_{E }(\phi_h )=\left(\frac{1}{|\tau_{E }|}\int_{E } e^{q^i\phi _h}\mathrm{d}s\right)^{-1},~
 	\omega_{E }^K  :=\omega_{E_{\nu \mu }}^K  =-(\nabla \psi_{k_\mu},\nabla \psi_{k_\nu})_K ,~\delta_{E }( u)=  u_{k_\nu}- u_{k_\mu}, ~ u=e^{q^i\phi_h }p_h^{i}.
\end{equation}}

We have introduced PNP equations and three commonly used FE discretizations. In next section, we present the existence of the solutions to the three FE schemes and the converence analysis for a linearized iteration.

 \setcounter{lemma}{0}
 \setcounter{theorem}{0}
 \setcounter{corollary}{0}
 \setcounter{equation}{0}
  \section{The existence of the discrete solution and the convergence analysis of the Gummel iteration }
 In this section, first we give the nonlinear algebraic system for the fully discrete FE schemes, then establish the theory of the existence of the solution to the schemes. After that, we present the contraction and convergence theories of a linearized iteration.

 \subsection{The existence of the fully discrete solution}


 Note that the linear finite element basis function $\phi_h$ on element $K$ can be expressed as
 \begin{equation}\label{phihEi}
\phi_h|_E =\sum_{m=1}^4\Phi_{k_m}\lambda^K_m= \lambda^K  \Phi_K,
\end{equation}
 where $ \lambda^K =(\lambda^K_1,\lambda^K_2,\lambda^K_3,\lambda^K_4)$ and $ \Phi_K=(\Phi_{k_1},\Phi_{k_2},\Phi_{k_3},\Phi_{k_4})^T $ are the volume coordinate vector and degree of freedom vector of element K.

 Assume $n_h$ dimensional vectors
 \begin{equation}\label{def-PhiG3}
 \Phi^{n+1}=(\Phi^{n+1}_1,\ldots,\Phi^{n+1}_{n_h})^T,\quad G_\Phi^{n+1}=((f^{n+1 },\psi_1),\ldots,(f^{n+1 },\psi_{n_h}))^T,
 \end{equation}
 and $2n_h$ dimensional vectors
 \begin{equation}\label{def-Pn}
P^\mu=
\begin{pmatrix}
 P^{1,\mu}\\
P^{2,\mu}
\end{pmatrix}
,~~
P^{i,\mu}=(p^{i,\mu}_1,\ldots,p^{i,\mu}_{n_h})^T,~\mu=n,n+1,~ i=1,2,
\end{equation}
{\small
\begin{equation}\label{def-PG}
G^{n+1}=
\begin{pmatrix}
 G^{1,n+1}\\
G^{2,n+1}
\end{pmatrix},~ G^{i ,n+1}=(g^{i ,n+1}_1,\ldots,g^{i ,n+1}_{n_h})=((F^{i ,n+1},\psi_1),\ldots,(F^{i,n+1},\psi_{n_h}))^T,~ i=1,2,
\end{equation}}
$n_h\times 2n_h$ lumped mass matrix
\begin{equation}  \label{C4-M-def}
\bar M =-(q^1M,q^2 M), ~ M=\frac{1}{4}\textrm{diag}(|\Omega_1|,\cdots ,|\Omega_{n_h}|), ~ \Omega_k=supp(\psi_k),~k=1,\ldots,{n_h},
\end{equation}
and $n_h\times n_h$ stiff matrix
\begin{equation}  \label{C4-AL-def}
A_L=(  \nabla\Psi^T,\nabla \Psi).
\end{equation}
Then the corresponding nonlinear algebraic equation for \eqref{fulldis2-FEM}-\eqref{fulldis-FEM} is
\begin{eqnarray}\label{FEM-non}
A_h^{1}(U_h^{1})U_h^{1} =F_h^1.
\end{eqnarray}
Here the coefficient matrix $A_h^{1}(U_h^{1})$, solution vector~$U_h^{1} $ and right hand vector~$F_h^1$ are respectively as follows
\begin{eqnarray}\label{FEM-non-AFU}
A_h^{1}(U_h^{1})=\left(
  \begin{array}{cc}
    A_L& \bar M \\
   \bf 0    &  {\bf M} + \tau \bar A^1(\Phi^{n+1}) \\
  \end{array}
\right)
,~U_h^{1}=\left(
  \begin{array}{c}
     \Phi^{n+1,1}\\P^{ n+1,1}
  \end{array}
\right),~F_h^{1}=\left(
  \begin{array}{c}
    G_\Phi^{n+1} \\
    F^{ n,1}
  \end{array}
\right),
\end{eqnarray}
where~$2n_h\times 2n_h$ lumped mass matrix and stiff matrix are respectively given by
\begin{equation}  \label{C4-M-def-2}
 {\bf M} =diag(M, M), ~\bar A^1(\Phi^{n+1})=diag(A_L+q^1C(\Phi), A_L+q^2C(\Phi)),
\end{equation}
right hand vector
\begin{eqnarray}\label{def-F}
F^{n,1}=
\left(\begin{array}{c}
    F^{1,n,1} \\
    F^{2,n,1}
  \end{array}\right)= \tau G^{n+1}+ {\bf M} P^n
 =\tau
\left(\begin{array}{c}
    G^{1,n+1}+MP^{1,n} \\
    G^{2,n+1}+MP^{2,n}
  \end{array}\right),
\end{eqnarray}
~$\bar M$, $M$ and~$A_L$ are defined by ~\eqref{C4-M-def} and \eqref{C4-AL-def}, respectively, and the general element of $C(\Phi)=(C(\Phi)_{ij})_{n_h\times n_h}$ is $C(\Phi)_{ij}=(\psi_j\nabla\phi_h ,\nabla \psi_i) $.

The nonlinear algebraic equation system for the SUPG discrete system \eqref{fulldis-SUPG}-\eqref{fulldis2-SUPG} is given by
{
\begin{eqnarray}\label{SUPG-non}
A_h^{2}(U_h^{2})U_h^{2} =F_h^2,
\end{eqnarray}}
where
\begin{eqnarray}\label{SUPG-AU}
A_h^{2}(U_h^{2})=\left(
  \begin{array}{cc}
    A_L& \bar M \\
   \bf 0    &  {\bf M} + \tau \bar A^2(\Phi^{n+1})  \\
  \end{array}
\right)
,~U_h^{2}=\left(
  \begin{array}{c}
     \Phi^{ n+1,2}\\P^{ n+1,2}
  \end{array}
\right), ~F_h^{2}=\left(
  \begin{array}{c}
    G_\Phi^{n+1} \\
    F^{n,2}
  \end{array}
\right).
\end{eqnarray}
Here
\begin{eqnarray}\label{SUPG-AF}
 \bar A^2(\Phi^{n+1}) =\bar A^1(\Phi^{n+1})+A^{stable}(\Phi^{n+1}) ,~ F^{n,2}=F^{n,1}+F ^{stable},
\end{eqnarray}
and~$ \bar M$, ${\bf M}$ and ~$A_L$ are defined by~\eqref{C4-M-def}, \eqref{C4-M-def-2} and~\eqref{C4-AL-def}, respectively,  the submatrix of the FEM system~$\bar{A}^1(\Phi^{n+1})$ and ~$F^{n,1}$ are given by~\eqref{C4-M-def-2} and~\eqref{def-F}, respectively, $A^{stable}(\Phi^{n+1})$ and~$F^{stable}$ are stiffness matrix and right hand vector for the stabilization term.

It is easy to know the nonlinear algebraic equation for the EAFE discrete system ~\eqref{fulldis2-EAFE}-\eqref{fulldis-EAFE} is as  follows:
{
\begin{eqnarray}\label{EAFE-non}
A_h^{3}(U_h^{3})U_h^{3} =F_h^3.
\end{eqnarray}}
Here the coefficient matrix, solution vector and right hand vector
\begin{eqnarray}\label{EAFE-non-AUF}
A_h^{3}(U_h^{3})=\left(
  \begin{array}{cc}
    A_L& \bar M \\
   \bf 0    &  {\bf M} + \tau \bar A^3(\Phi^{n+1}) \\
  \end{array}
\right)
,~U_h^{3}=\left(
  \begin{array}{c}
     \Phi^{ n+1,3}\\P^{ n+1,3}
  \end{array}
\right),~F_h^{3}=F_h^{1},
\end{eqnarray}
where $ \bar M$, ${\bf M}$ and $A_L$ are defined by \eqref{C4-M-def}, \eqref{C4-M-def-2} and \eqref{C4-AL-def}, respectively, the right hand sider $F_h^{1}$ is given by  \eqref{FEM-non-AFU},
 $2n_h\times 2n_h$ stiff matrix
\begin{equation}  \label{C4-A-def}
\bar A^3(\Phi^{n+1})=diag(\bar A^1(\Phi^{n+1}),\bar A^2(\Phi^{n+1})),
\end{equation}
the general element of the element stiffness matrix ~$\bar A^{i,K}(\Phi^{n+1})$  on tetrahedron element $K$ for $\bar A^i(\Phi^{n+1})$ is as follows:
{
\begin{equation}\label{aij}
\bar a_{\nu \mu }^{i,K}(\Phi^{n+1})=\left\{\begin{array}{ll}
 & -\omega^K_{E_{\nu \mu }}\tilde{\alpha} _{E_{\nu \mu }}^{K,i}(\Phi^{n+1})e^{q^i \lambda^K(q_{\mu })  \Phi ^{n+1}_K} , {\nu >\mu }, \\
 & -\omega^K_{E_{\mu \nu }}\tilde{\alpha} _{E_{\mu \nu }}^{K,i}(\Phi^{n+1})e^{q^i \lambda^K(q_{\mu })  \Phi ^{n+1}_K} , {\nu <\mu }, \\
 &\sum\limits_{ k> \mu}\omega^K_{E_{\mu  k}}\tilde{\alpha} _{E_{\mu k}}^{K,i}(\Phi^{n+1})e^{q^i \lambda^K(q_{\mu })  \Phi ^{n+1}_K}
 \\&+ \sum\limits_{ k<\mu} \omega^K_{E_{k\mu}}\tilde{\alpha} _{E_{k\nu_2}}^{K,i}(\Phi^{n+1})e^{q^i \lambda^K(q_{\mu })  \Phi ^{n+1}_K} ,  {\nu =k},
\end{array}\right. ~\nu ,\mu=1,2,3,4,~i=1,2,
\end{equation}}
where~$\omega^K_{E_{\nu \mu }}$ is defined by~\eqref{omega}, $q_1,q_2,q_3,q_4$ are four vertices of element~$K$ (see Fig. \ref{figK}), and
\begin{equation}\label{[1](3.10)}
\tilde{\alpha}^{K,i}_{E_{\nu \mu } }(\Phi^{n+1} )=\left[\frac{1}{|\tau_{E }|}\int_{E } e^{q^i \lambda^K  \Phi ^{n+1}_K}\mathrm{d}s\right]^{-1}.
\end{equation}

\begin{figure}[H]
 	\centering
 	\includegraphics[width=4cm,height=4cm]{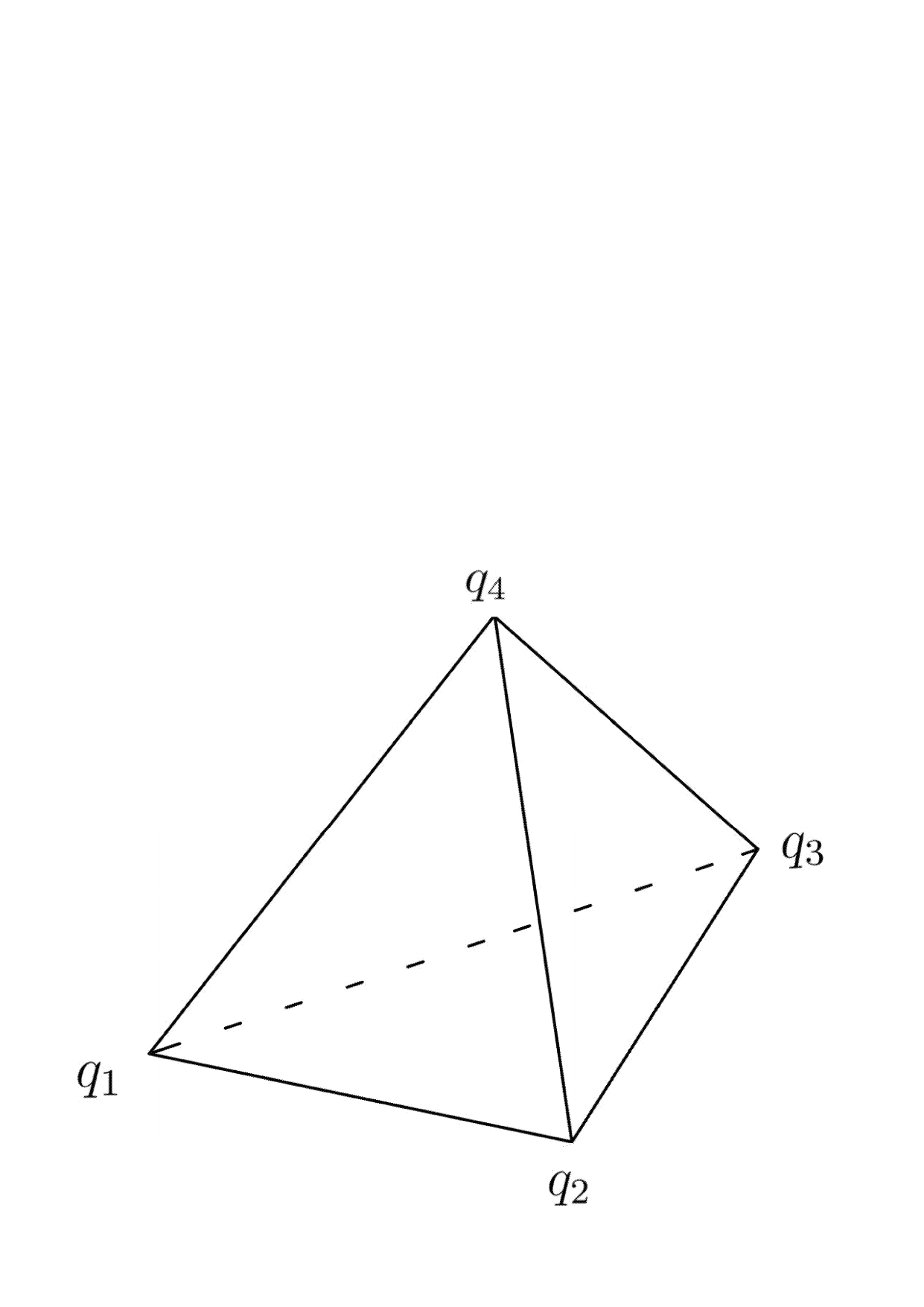}
 	\caption{Tetrahedral element $K$ }
 	\label{figK}
 \end{figure}

 From~\eqref{FEM-non}, \eqref{SUPG-non} and~\eqref{EAFE-non}, we have
\begin{equation}\label{tildebar}
  \Phi^{ n+1,i}(P^{ n+1,i}):=G_A^{n+1}-\bar BP^{ n+1,i},~\bar B:=A_L^{-1}\bar M,~G^{n+1}_A:=A_L^{-1}G^{n+1 }_\Phi,~i=1,2,3.
\end{equation}
Hence, in view of~\eqref{FEM-non}, \eqref{SUPG-non}, and~\eqref{EAFE-non}, it follows that the nonlinear subsystem with respect of $P^{i,n+1}$ on time layer~$t=t^{n+1}$ is
\begin{eqnarray}\label{EAFE-non-2}
A^i(P^{n+1})P^{n+1,i}= F^{n,i},~i=1,2,3,
\end{eqnarray}
where
\begin{eqnarray}\label{EAFE-non-2-AP}
A^i (P )= {\bf M} + \tau\tilde A^i(P ), ~\tilde  A^i(P ):= \bar A^i(\Phi(P )),~\tilde A^i(P )=diag(\tilde A^{1,i}(P ),\tilde A^{2,i}(P )) .
\end{eqnarray}

It is easy to know that it suffices to prove the existence of the solution of nonlinear subsystem~\eqref{EAFE-non-2} in order to show the existence of the solutions of the three nonlinear fully-discrete scheme ~\eqref{FEM-non}, \eqref{SUPG-non} and~\eqref{EAFE-non}. The following assumption is needed to show the existence of the solution to \eqref{EAFE-non-2}.
\begin{assu}\label{assum-M}
The stiffness matrix $A^i (P^{n+1}),~i=1,2,3$ defined by~\eqref{EAFE-non-2-AP} is a column M-matrix.
\end{assu}

The following Lemma \ref{corollary-2} proved that Assumption
\ref{assum-M} holds under some certain conditions for EAFE scheme. For similicity, the superscript $3$ representing EAFE scheme is omitted in the proof, and denote
the element stiffness matrix on element $K$ of~$\tilde A^{i}(P^{n+1})$ by
\begin{eqnarray}\label{tildeA(P)}
\tilde A^{i,K}(P^{n+1})=(\tilde  a_{\nu \mu }^{i,K}(P^{n+1}))_{4 \times 4},~i=1,2.
\end{eqnarray}

\begin{lemma} \label{corollary-2} If for any element~$K\in \mathcal {T}_h$,  $ \omega^K_{E _{\nu \mu }}$ defined by \eqref{omega} satisfies
\begin{equation}\label{cytj}
 \omega_{E _{\nu \mu }}^K>0,~\nu <\mu  ,~\nu ,\mu =1,2,3,4,
\end{equation}
then~$\tilde  A(P^{n+1})$ and~$A(P^{n+1})$ are all column M-matices.
 \end{lemma}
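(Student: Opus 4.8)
The plan is to reduce the claim to a statement about a single ionic species by exploiting the block structure recorded in \eqref{EAFE-non-2-AP}. Since $\tilde A(P^{n+1})=\mathrm{diag}(\tilde A^{1}(P^{n+1}),\tilde A^{2}(P^{n+1}))$ and $A(P^{n+1})=\mathbf M+\tau\tilde A(P^{n+1})$ with $\mathbf M$ a diagonal lumped-mass matrix having strictly positive entries, it suffices to show that each global single-species block $\tilde A^{i}$, $i=1,2$, has nonpositive off-diagonal entries, positive diagonal entries, and is column diagonally dominant; the M-matrix conclusion for both $\tilde A$ and $A$ then follows from a standard characterization of (column) M-matrices. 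I would begin from the local element matrix $\tilde A^{i,K}=(\tilde a^{i,K}_{\nu\mu})$ whose entries are listed in \eqref{aij}.

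First I would settle the signs. For $\nu\ne\mu$ the entry $\tilde a^{i,K}_{\nu\mu}$ equals $-\omega^{K}_{E}\,\tilde\alpha^{K,i}_{E}\,e^{q^{i}\lambda^{K}(q_{\mu})\Phi^{n+1}_{K}}$, where $E$ is the edge joining vertices $\nu$ and $\mu$. The factor $\tilde\alpha^{K,i}_{E}$ in \eqref{omega} is the reciprocal of an integral of a strictly positive exponential, hence positive, and the nodal exponential is positive as well; under the hypothesis \eqref{cytj} that $\omega^{K}_{E_{\nu\mu}}>0$, this forces $\tilde a^{i,K}_{\nu\mu}<0$ for $\nu\ne\mu$, while the diagonal entries are sums of terms of the same positive form and so are strictly positive. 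The decisive observation is then that every entry of a fixed column $\mu$ carries the \emph{same} nodal factor $e^{q^{i}\lambda^{K}(q_{\mu})\Phi^{n+1}_{K}}=e^{q^{i}\Phi^{n+1}_{k_\mu}}$, because $\lambda^{K}(q_{\mu})\Phi^{n+1}_{K}$ is the value of $\phi_h$ at vertex $\mu$. Using the symmetries $\omega^{K}_{E_{\nu\mu}}=\omega^{K}_{E_{\mu\nu}}$ and $\tilde\alpha^{K,i}_{E_{\nu\mu}}=\tilde\alpha^{K,i}_{E_{\mu\nu}}$ built into \eqref{omega}, I would add the off-diagonal entries of column $\mu$ and recognize their sum as exactly the negative of $\tilde a^{i,K}_{\mu\mu}$; hence $\sum_{\nu}\tilde a^{i,K}_{\nu\mu}=0$ for each $\mu$. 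This vanishing of local column sums is what makes the column (rather than row) orientation the natural one.

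Next I would assemble over $K\in\mathcal T_h$. The off-diagonal entries of the global $\tilde A^{i}$ stay nonpositive and the diagonal entries stay positive, and the local column sums cancel, so that summed over all degrees of freedom the columns of $\tilde A^{i}$ would have zero sum. The homogeneous Dirichlet condition, however, removes the boundary rows from $V_h$, and this only deletes nonpositive off-diagonal contributions; consequently each interior column sum is $\ge 0$, and strictly positive in the columns belonging to nodes adjacent to $\partial\Omega$. Thus $\tilde A^{i}$ is weakly column diagonally dominant, and together with irreducibility of the mesh graph and the presence of a strictly dominant column this makes $\tilde A^{i}$ a nonsingular column M-matrix. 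Finally, adding the positive diagonal matrix $\mathbf M$ (with $\tau>0$) in $A^{i}=\mathbf M+\tau\tilde A^{i}$ strictly raises every diagonal entry while leaving the off-diagonals unchanged, so $A^{i}$ becomes strictly column diagonally dominant; by the standard characterization $A^{i}$, and hence $A$, is a column M-matrix.

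The main obstacle I anticipate is the combination of the local column-sum identity with the boundary bookkeeping. One must follow the edge-orientation and local-to-global indexing conventions of \eqref{omega}--\eqref{aij} precisely enough to be certain that the off-diagonal column entries recombine to cancel the diagonal, and then argue that deleting the Dirichlet rows upgrades weak dominance to the strict (irreducible) dominance required for nonsingularity of $\tilde A^{i}$ itself; the mass term renders this automatic for $A^{i}$, but not for the bare stiffness block $\tilde A^{i}$.
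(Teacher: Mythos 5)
Your proof is correct and follows essentially the same route as the paper's: sign analysis and zero column sums of the element matrices $\tilde A^{i,K}$, assembly into a global L-matrix with zero column sums, Dirichlet elimination producing at least one strictly diagonally dominant column, and finally adding the positive diagonal mass matrix $\mathbf M$ to pass from $\tilde A(P^{n+1})$ to $A(P^{n+1})$. If anything, you are more careful than the paper, which concludes the column M-matrix property from a single strictly dominant column without spelling out the irreducibility (weakly chained dominance) argument that you explicitly supply.
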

\begin{proof}
It can be verified directly from ~\eqref{EAFE-non-2-AP} and~\eqref{aij} that the column sum of the elements of~$\tilde A^{i,K}$ is zero and the off-diagonal element
\begin{equation*}\label{aijK}
\tilde  a_{\nu \mu }^{i,K}(P^{n+1})=\left\{\begin{array}{ll}
 -\omega ^K_{E_ {\nu \mu }}\tilde{\alpha} _{E_{\nu \mu }}^{K,i}(\Phi^{n+1})e^{q^i \lambda^K(q_{\mu })  \Phi^{n+1} _K} , &  {\nu >\mu } \\
  -\omega^K_{E_ {\mu  \nu }}\tilde{\alpha} _{E_{\mu \nu }}^{K,i}(\Phi^{n+1})e^{q^i \lambda^K(q_{\mu })  \Phi ^{n+1}_K} , &  {\nu <\mu }
\end{array}\right.,\nu ,\mu =1,2,3,4,~i=1,2.
\end{equation*}
Hence, from~\eqref{[1](3.10)} we know the sufficent and necessary conditions for all diagonal elements of~$\tilde A^{i,K}$ to be less than zero are~\eqref{cytj} holds. From
~\eqref{cytj}, it is easy to know the diagonal element of ~$\tilde A^{i,K}$ is
\begin{align*}\label{aiiK}
\tilde  a_{\mu \mu}^{i,K}(P^{n+1})=&\sum\limits_{ k> \mu}\omega^K_{E_{\mu k}}\tilde{\alpha} _{E_{\mu k}}^{K,i}(\Phi^{n+1})e^{q^i \lambda^K(q_{\mu})  \Phi ^{n+1}_K}
\\&+ \sum\limits_{ k< \mu} \omega^K_{E_{k\mu}}\tilde{\alpha} _{E_{k\mu}}^{K,i}(\Phi^{n+1})e^{q^i \lambda^K(q_{\mu})  \Phi ^{n+1}_K}>0,~\mu=1,2,3,4,
\end{align*}
which implies for any given element~$K$,  the element stiffness matrix~$\tilde A^{i,K}(P^{n+1}) $ is an
~L-matrix and the column sum of its elements is zero. Obviously, the global stiffness matrix integrated from
~$\tilde A^{i,K}(P^{n+1})$ is also an L-matrix and the column sum of its elements is zero. Note that $\tilde A^{i}(P^{n+1})$  is the global stiffness matrix obtained after the Dirichlet boundary treatment of $\tilde A_b^{i}(P^{n+1})$, that is, there is at least one column of strictly diagonally dominant matrix, which implies ~$\tilde A ^{i}(P^{n+1})$ is a column M-matrix. Hence, from \eqref{EAFE-non-2-AP} we have~$\tilde A(P^{n+1})$  is a column M-matrix, which combining with ~\eqref{EAFE-non-2-AP} and~\eqref{C4-M-def} yields~$A (P^{n+1})$ is a column M-matrix too.

\end{proof}

\begin{remark}
The condition~\eqref{cytj} can be replaced by the requirement of mesh quality, e.g. each element in the triangular partition is acute triangle in the two-dimensional case
\textsuperscript{\cite{xu1999monotone}}.

\end{remark}

It is shown in~\cite{prohl2009convergent} that if $h$ and~$\tau$ are small enough, and  $\mathcal T_h$ is a strong acute angle partition,  Assumption \ref{assum-M} holds for the standard FE scheme. Note that
from~\eqref{SUPG-AF}, we have $A^2(P^{n+1})=A^1(P^{n+1})+A^{stable}(\Phi^{n+1})$, and $A^{stable}(\Phi^{n+1})$ is a column M-matrix when the diffusion coefficient satisfies certain conditions. Hence if $\mathcal T_h$ is a strong acute angle partition,  $h$ and $\tau$ are small enough and the diffusion coefficient satisfies certain conditions{\color{blue}(e.g. ~$\nabla \phi=(a,-a,a)~or ~(-a,a,-a) ,~a\in R$)}, then Lemma \ref{assum-M} holds for ~SUPG scheme.

 	
Next, we will show the existence of the solution to the nonlinear subsystem \eqref{EAFE-non-2} under Assumption ~\ref{assum-M}. We only prove it for the EAFE scheme~\eqref{EAFE-non} as an example.  Setting the time layer $t=t_{J+1}$, $J\in\{0,\ldots,N-1\}$,
then ~\eqref{EAFE-non-2} can be written as
\begin{equation}\label{UphiJJ}
 P=\phi^{J+1}(P),~\phi^{J+1}(P):= (A (P))^{-1}F^J,~P:=P^{J+1}.
\end{equation}

Note that $P^0=(p^{0,1}_1,\ldots,p^{0,1}_{n_h},p^{0,2}_1,\ldots,p^{0,2}_{n_h})^T$ is the vector composed of the initial ion concentration values at nodes. Assume there exists a positive constant $C_{p^0}$, such that
\begin{equation}\label{pn}
\min_{k\in\{1,\ldots, n_h\},i= 1,2 }  p^{0,i}_k \ge C_{p^0}>0.
\end{equation}
Note that the assumption (\ref{pn}) is presented based on the physcical background of the solution. It is pointed our later in Remark \ref{init-assume} that this assumption can be removable from mathematical view.


Then we have the following property and lemma.
\begin{property}\label{pro1}
For any element  $K\in \mathcal T_h$, the general element $\tilde  a_{\nu \mu }^{i,K}(P^{n+1})$ of the element stiffness matrix defined by  \eqref{tildeA(P)} is smooth enough with respect to~$P^{n+1}$.

In fact, from \eqref{tildebar} we know~$\Phi^{n+1}_K(P^{n+1})$ is a linear polynomial function with respect to $P^{n+1}$, which follows that $e^{ \lambda^K(q_{\mu })  {\Phi}^{n+1}_K} $ is smooth enough with repect to~$P^{n+1}$. Hence in view of \eqref{aij}, ~\eqref{[1](3.10)} and the fact that ~$\omega$ is a constant independent of $P^{n+1}$, we have ~$\bar a_{\nu \mu }^{i,K}(\Phi^{n+1}(P^{n+1})) $ is smooth enough with respect to $P^{n+1}$.
\end{property}

Using Property~\ref{pro1}, noting that~$\tilde A^i(P^{n+1})$ is assembled from ~$\tilde A^{i,K}(P^{n+1}),~\forall K\in \mathcal T_h$, and from~\eqref{EAFE-non-2-AP}, it yields any element of the stiffness matrices~$\tilde A^i(P^{n+1}),i=1,2$ or~$\tilde A(P^{n+1})$ is smooth enough with respect to $P^{n+1}$.

\begin{lemma}\label{lemma1}
Assume $f$ and $F^i$ in (\ref{weak1con})-(\ref{weak2con}) satisfies
\begin{equation}\label{Fi}
f  , ~F^i  \in {L^{\infty}(0,T;L^{\infty}(\Omega))} ,~i=1,2. 
\end{equation}
If there exists a positive constants~$C_{p^{J}}$ satisfying
\begin{equation}\label{pn-J}
\min_{k\in\{1,\ldots, n_h\},i= 1,2 }  p^{{J},i}_k \ge C_{p^{J}}>0,
\end{equation}
then there exists a positive constant $\tau_{*,J}$ indepedent of $P^J$, such that
\begin{equation}\label{FJge0}
F^J> 0,~\mbox{if}~\tau< \tau_{*,J}.
\end{equation}
	
\end{lemma}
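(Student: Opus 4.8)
The plan is to prove $F^J>0$ entrywise, exploiting the additive split of the right-hand side from \eqref{def-F} into a lumped-mass part and a source part. At the time layer $t^{J+1}$ (so $n=J$), for each ionic species $i=1,2$ the $k$-th entry of the $i$-th block of $F^J=F^{J,1}$ reads, by the definitions \eqref{def-PG}, \eqref{C4-M-def} and \eqref{def-F},
\[
F^{J,i}_k = \tau\,(F^{i,J+1},\psi_k) + M_{kk}\,p^{J,i}_k, \qquad M_{kk}=\tfrac14|\Omega_k|.
\]
The mechanism I would use is that the mass term $M_{kk}p^{J,i}_k$ is strictly positive and of order $O(1)$ in $\tau$ by the hypothesis \eqref{pn-J}, whereas the source term is of order $O(\tau)$; hence for $\tau$ small the sum stays positive. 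It thus suffices to lower-bound $M_{kk}p^{J,i}_k$ and upper-bound $\tau|(F^{i,J+1},\psi_k)|$ and compare.

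First I would bound the source pairing. The linear nodal basis function satisfies $\psi_k\ge 0$, and for the lumped mass matrix $\int_\Omega \psi_k\,dx=M_{kk}$ (indeed $\int_K\psi_k=|K|/4$ on each tetrahedron, summed over $\mathrm{supp}(\psi_k)$). Together with the regularity assumption \eqref{Fi} this gives
\[
|(F^{i,J+1},\psi_k)| \le \|F^{i,J+1}\|_{L^\infty(\Omega)}\int_\Omega \psi_k\,dx \le \|F^i\|_{L^\infty(0,T;L^\infty(\Omega))}\,M_{kk}.
\]
Combining this with the lower bound $p^{J,i}_k\ge C_{p^J}$ from \eqref{pn-J} yields
\[
F^{J,i}_k \ge M_{kk}\bigl(C_{p^J} - \tau\,\|F^i\|_{L^\infty(0,T;L^\infty(\Omega))}\bigr).
\]

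Since $M_{kk}=\tfrac14|\Omega_k|>0$, the factor on the right is strictly positive once $\tau<C_{p^J}/\|F^i\|_{L^\infty(0,T;L^\infty(\Omega))}$. I would therefore set
\[
\tau_{*,J} := \frac{C_{p^J}}{\max_{i=1,2}\|F^i\|_{L^\infty(0,T;L^\infty(\Omega))}},
\]
so that $\tau<\tau_{*,J}$ forces $F^{J,i}_k>0$ for every node $k$ and every species $i$, that is $F^J>0$, which is exactly \eqref{FJge0}. Note that $\tau_{*,J}$ depends only on the given constant $C_{p^J}$ and on the data norms $\|F^i\|_{L^\infty(0,T;L^\infty(\Omega))}$, and not on the individual entries of $P^J$, which is the claimed independence.

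The computation is elementary; the only step deserving care is the identity $\int_\Omega\psi_k\,dx=M_{kk}$ combined with the sign property $\psi_k\ge0$, since this is precisely what lets the $L^2$-pairing $(F^{i,J+1},\psi_k)$ be controlled by the \emph{same} mass weight $M_{kk}$ that multiplies $p^{J,i}_k$, so that $M_{kk}$ cancels from the final comparison and produces a clean, mesh-independent threshold. I expect this to be the main (though mild) obstacle: were one to keep the consistent mass matrix instead of the lumped one in \eqref{C4-M-def}, the off-diagonal entries would have to be absorbed by a row/column estimate, and the sharp factorization above would no longer hold verbatim.
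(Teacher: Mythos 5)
Your proof is correct and rests on the same mechanism as the paper's: split the $k$-th entry as $f^{i,J}_k=\tau g^{i,J+1}_k+\tfrac14|\Omega_k|\,p^{J,i}_k$, observe the mass part is $O(1)$ and bounded below by $C_{p^J}$ via \eqref{pn-J} while the source part is $O(\tau)$, and shrink $\tau$. Where you genuinely diverge is in how the source term is controlled. The paper bounds $|g^{i,J+1}_k|$ crudely by $\|G^{J+1}\|_{L^\infty(\Omega)}$ (a norm of the assembled load vector) and compensates with the factor $\min_k|\Omega_k|$, arriving at $\tau_{*,J}=\frac{C_{p^J}\min_k|\Omega_k|}{4\|G^{J+1}\|_{L^\infty(\Omega)}}$, which is mesh-dependent and degrades on nonuniform partitions. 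You instead exploit $\psi_k\ge 0$ together with the identity $\int_\Omega\psi_k\,dx=\tfrac14|\Omega_k|=M_{kk}$ (valid for linear elements on tetrahedra, matching the paper's lumping factor $1/4$; in $2$D the constant would be $1/3$ and your threshold would only change by a harmless fixed factor), so the mass weight cancels and you obtain the cleaner, mesh-independent threshold $\tau_{*,J}=C_{p^J}/\max_i\|F^i\|_{L^\infty(0,T;L^\infty(\Omega))}$. Your chain also yields strict positivity directly, whereas the paper's displayed chain ends in ``$\ge 0$'' and is slightly sloppy when $g^{i,J+1}_k=0$. Two small points to tidy: (i) handle the degenerate case $F^1=F^2\equiv 0$ explicitly (then $F^J>0$ for every $\tau$; the paper does this via its $G^{J+1}=\mathbf{0}$ branch), since otherwise your formula for $\tau_{*,J}$ divides by zero; (ii) like the paper, you evaluate $F^i$ at the time node $t^{J+1}$ while only assuming $F^i\in L^\infty(0,T;L^\infty(\Omega))$ — pointwise-in-time evaluation strictly needs continuity in time or an a.e.\ convention, but this abuse is shared with the paper's own proof and is not a gap relative to it.
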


\begin{proof}
	Using the definitions of $F^J$ and ${\bf M}$ in ~\eqref{def-F}  and \eqref{C4-M-def-2}, respectively, we know the $k$th element of $F^{i,J}$ satisfies
	\begin{equation} \label{fgge0}
f_k^{i,J}=\tau g_k^{i,J+1}+\frac{|\Omega_k|}{4}p^{i,J}_k,i=1,2,
\end{equation}
where $g_k^{i,J+1}$ and $p^{i,J}_k $ are the $k$th element of $G^{i,J+1}$ and $P^{i,J}$, respectively.
	
	If $G^{J+1}=\bf 0$, then \eqref{FJge0} holds for any  $\tau$ by \eqref{fgge0} and \eqref{pn-J}. Next we consider the case $G^{J+1}\ne \bf 0$. From the definition of $G^{J+1}$ in \eqref{def-PG} and the assumption \eqref{Fi}, it is obvious  $\|G^{J+1}\|_{L^{\infty}(\Omega)}$ is bounded. Hence we can set
	\begin{equation}\label{def-tau*}
	\tau_{*,J}= \frac{C_{p^J}\min\limits_{k} |\Omega_k|}{4\|G^{J+1}\|_{L^{\infty}(\Omega)}}>0.
	\end{equation}
	Using \eqref{fgge0}, \eqref{def-tau*} and \eqref{pn-J}, for $\tau<\tau_{*,J}$, there holds
	\begin{eqnarray*}\notag
f_k^{i,J}&=&\tau g_k^{i,J+1 }+\frac{|\Omega_k|}{4}p^{i,J }_k
>-\frac{C_{p^J}\min _{k} |\Omega_k|}{4\|G^{J+1}\|_{L^{\infty}(\Omega)}}|g_k^{i,J+1 }|+\frac{|\Omega_k|}{4}p^{i,J }_k
\\&\ge&-\frac{C_{p^J}\min _{k} |\Omega_k|}{4  } +\frac{|\Omega_k|}{4}p^{i,J }
\ge\frac{  C_{p^J}}{4  }(- {\min _{k} |\Omega_k|} + {|\Omega_k|} )\ge0.\label{gle0}
\end{eqnarray*}
	Thus we get \eqref{FJge0}. This completes the proof of this lemma.
\end{proof}

\begin{lemma}\label{lemma2}
	Let $E^T=(1,\ldots,1)^T\in R^{2n_h}$ and assume \eqref{cytj} and \eqref{FJge0} hold.  
	Define the positive constants
	\begin{equation}\label{def-CJ}
	C_J= EF^J,~C^{J+1}_k=\frac{4C_J}{|\Omega_k|},
	\end{equation}
	where $F^J$ is defined by \eqref{FJge0},
	and compact convex set
	\begin{equation}\label{set-JJ}
	\mathcal{C}^{J+1}=\{ P~| P\in \mathbb{R}^{2{n_h}}, p_k^i\in[0,C_k^{J+1}],~k=1,2,\ldots,{n_h},i=1,2\}.
	\end{equation}
	Then $\phi^{J+1}$ in \eqref{UphiJJ} is a continuous mapping from $\mathcal C^{J+1}$ to $\mathcal C^{J+1}$, if $\tau<\tau_{*,J}$, where $\tau_{*,J}$ is given in \eqref{FJge0}.
\end{lemma}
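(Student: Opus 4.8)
The plan is to verify the two hypotheses of Brouwer's fixed point theorem for the map $\phi^{J+1}$ on the compact convex set $\mathcal C^{J+1}$ defined in \eqref{set-JJ}, namely that $\phi^{J+1}$ is continuous and that it carries $\mathcal C^{J+1}$ into itself. Throughout I would write $\tilde P:=\phi^{J+1}(P)$, so that by \eqref{UphiJJ} the output solves $A(P)\tilde P=F^J$ with $A(P)=\mathbf M+\tau\tilde A(P)$ as in \eqref{EAFE-non-2-AP}, and I would keep in mind that the column M-matrix structure of $A(P)$ from Lemma \ref{corollary-2} depends only on \eqref{cytj} and is therefore uniform in $P$.

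First I would establish continuity. By Lemma \ref{corollary-2}, condition \eqref{cytj} guarantees that $A(P)$ is a column M-matrix, hence nonsingular, for every $P\in\mathcal C^{J+1}$, so $(A(P))^{-1}$ is well defined on the whole set. Property \ref{pro1} shows that every entry of $\tilde A(P)$, and thus of $A(P)$, is a smooth function of $P$; since matrix inversion is continuous on the open set of nonsingular matrices (e.g. via the cofactor/adjugate formula), the composition $P\mapsto (A(P))^{-1}F^J=\phi^{J+1}(P)$ is continuous on $\mathcal C^{J+1}$.

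Next comes the self-mapping property, which I would split into a lower and an upper bound on the components $\tilde p_k^i$ of $\tilde P$. For the lower bound I would use that a nonsingular column M-matrix has a nonnegative inverse, $(A(P))^{-1}\ge 0$ entrywise; since $\tau<\tau_{*,J}$, Lemma \ref{lemma1} gives $F^J>0$, and therefore $\tilde P=(A(P))^{-1}F^J\ge 0$, i.e. $\tilde p_k^i\ge 0$ for all $k,i$. The upper bound is the crux and is obtained by summing: multiplying $A(P)\tilde P=F^J$ on the left by the row vector $E$ yields $E\mathbf M\tilde P+\tau E\tilde A(P)\tilde P=EF^J=C_J$. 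The structural fact extracted in the proof of Lemma \ref{corollary-2} is that the pre-Dirichlet global matrix $\tilde A_b(P)$ has vanishing column sums while its off-diagonal entries are nonpositive; after the Dirichlet treatment each retained column sum equals minus the sum of the deleted boundary off-diagonal entries, hence is nonnegative, so $E\tilde A(P)\ge 0$. Because $\tilde P\ge 0$, the term $\tau E\tilde A(P)\tilde P$ is nonnegative and may be dropped, giving $E\mathbf M\tilde P\le C_J$. Since $E\mathbf M\tilde P=\sum_{i=1,2}\sum_{k=1}^{n_h}\tfrac{|\Omega_k|}{4}\tilde p_k^i$ is a sum of nonnegative terms, each single term is bounded by the total, so $\tfrac{|\Omega_k|}{4}\tilde p_k^i\le C_J$, that is $\tilde p_k^i\le \tfrac{4C_J}{|\Omega_k|}=C_k^{J+1}$ by \eqref{def-CJ}. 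Hence $\tilde P\in\mathcal C^{J+1}$.

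Combining the continuity of $\phi^{J+1}$ with the inclusion $\phi^{J+1}(\mathcal C^{J+1})\subseteq\mathcal C^{J+1}$ completes the argument. I expect the main obstacle to be the upper bound, specifically justifying $E\tilde A(P)\ge 0$ uniformly in $P$ together with the nonnegativity $(A(P))^{-1}\ge 0$: both hinge on the fact that the L-matrix/zero-column-sum structure and the column M-matrix property produced in Lemma \ref{corollary-2} are genuinely independent of the argument $P$, so that the same sign information can be applied to the output $\tilde P$ for every admissible input.
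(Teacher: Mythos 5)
Your proposal is correct and follows essentially the same route as the paper's proof: continuity via the uniform column M-matrix property (Lemma \ref{corollary-2}) and smoothness of the entries (Property \ref{pro1}), the lower bound $\tilde p_k^i\ge 0$ from the nonnegativity of the inverse of an M-matrix applied to $F^J>0$, and the upper bound by left-multiplying $A(P)\tilde P=F^J$ with $E$, discarding the nonnegative term $\tau E\tilde A(P)\tilde P$ (justified by the zero/nonnegative column-sum structure from Lemma \ref{corollary-2} together with $\tilde P\ge 0$), and reading off $\tfrac{|\Omega_k|}{4}\tilde p_k^i\le C_J$ componentwise. The only cosmetic difference is that you re-derive $F^J>0$ via Lemma \ref{lemma1}, whereas the lemma simply assumes \eqref{FJge0}, and you spell out the post-Dirichlet column-sum argument that the paper delegates to its citation of Lemma \ref{corollary-2}.
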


\begin{proof}
	Since $A(P)$ is a column M-matrix, we get $A(P)$ is nonsingular. Then from Property~\ref{pro1}, we have $A^{-1}(P)$ is smooth enough. From \eqref{UphiJJ}, it follow that $\phi^{J+1}$ is continuous in~$\mathcal C^{J+1}$. Hence it suffices to prove $\phi^{J+1}$ is a mapping from $\mathcal C^{J+1}\rightarrow \mathcal C^{J+1}$, that is
	\begin{equation*}\label{eq}
	\bar P=\phi^{J+1}(P )\in \mathcal C^{J+1}, ~\forall P \in \mathcal C^{J+1}.
	\end{equation*}
	From the definition of $ \mathcal C^{J+1}$ in~\eqref{set-JJ}, we only need to show the general element $\bar p^i_{k}$ of $\bar P$ satisfies
	\begin{equation}\label{Ci-J}
	\bar p^i_{k} \in [0,C^{J+1}_k], ~ k=1,\ldots, {n_h},i=1,2.
	\end{equation}
From~\eqref{UphiJJ}, we know~$\bar P  =\phi^{J+1}(P )$ is equivalent to the following equation
	\begin{equation}\label{eqJJ}
	A (P)\bar P =F^J.
	\end{equation}
	Note that since~$A^T(P)$ is an M-matrix,  the nonsingular matrix $B=(b_{j_1j_2})_{2n_h\times2n_h}:=(A^T(P))^{-1}$ is a nonnegative matrix, i.e. there is at least one positive element on each row of $B^T$. Then using  \eqref{eqJJ} and \eqref{FJge0}, we have the general element $p^i_{k}$ of $\bar P$ satisfies
	\begin{equation}\label{PJge0}
	\bar p^i_{k}= ( B^TF^J)_k
	>0,~ k=1,\ldots, {n_h},i=1,2.
	\end{equation}
	Hence to show~\eqref{Ci-J}, it only need to prove
	\begin{equation}\label{Ci-J1}
	\bar p^i_{k} \le C_k^{J+1}, ~ k=1,\ldots, {n_h}, ~i=1,2.
	\end{equation}
    From ~\eqref{def-CJ}, \eqref{eqJJ} and ~\eqref{EAFE-non-2-AP}, we have
	\begin{equation}  \label{C4-eq1.39-J}
	C_{J} =E F^J=E  A (P ) \bar P =E  ({\bf M}+\tau \tilde  A(P )) \bar P .
	\end{equation}

	From Corollary~\ref{corollary-2} and \eqref{PJge0}, it follows that
	$E\tilde  A(P) \bar P> 0$, which combining with
	~\eqref{C4-eq1.39-J}, \eqref{C4-M-def-2} and~\eqref{def-Pn} yields
	\begin{eqnarray*}
		C_J&=&{\color{black}E  ({\bf M}+\tau \tilde  A(P )) \bar P }
		> E {\bf M}  \bar P  = \sum_{j=1}^{n_h} \frac{|\Omega_j|}{4}( \bar p^1_{ j}+\bar  p^2_{ j}).
	\end{eqnarray*}
	From \eqref{PJge0} and using the fact  $C^{J+1}_k=\frac{4C_J}{|\Omega_k|}>0$, we get
	\begin{eqnarray}\label{CJ}
	C_J > \frac{|\Omega_k|}{4}  \bar p^i_{ k}=\frac{C_J}{C^{J+1}_k} \bar p^i_{ k}, ~\forall k\in\{1,\ldots, n_h\},i\in\{1,2\},
	\end{eqnarray}
	which implies ~\eqref{Ci-J1}. We complete the proof of this lemma.
\end{proof}

From Lemmas \ref{lemma1} and \ref{lemma2}, and Brouwer fix point theorem (see~\cite{park1999ninety}), we have the following existence theorem.

\begin{theorem}  \label{th-1}
	Assume  \eqref{cytj}, \eqref{pn}  and \eqref{Fi} hold. Then the solution of nonlinear system \eqref{UphiJJ} exists on any given time step $t=t^n,n\in\{1,\ldots,N\}$.
\end{theorem}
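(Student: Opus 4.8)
The plan is to prove the theorem by induction on the time level, with the three preceding lemmas supplying the inductive step and Brouwer's fixed point theorem producing the solution at each layer. The quantity that must be propagated through the induction is not merely existence but the \emph{strict positivity} of the discrete concentration vector, because that positivity is exactly the hypothesis \eqref{pn-J} needed to re-apply Lemma \ref{lemma1} at the following layer.

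For the base case, note that the initial vector $P^0$ satisfies \eqref{pn}, which is precisely \eqref{pn-J} at $J=0$ with $C_{p^0}>0$, so the inductive machinery can start. For the inductive step, suppose the solution $P^J$ at $t=t^J$ exists and satisfies \eqref{pn-J} with some $C_{p^J}>0$. First I would invoke Lemma \ref{lemma1}: under the data assumption \eqref{Fi} it produces a threshold $\tau_{*,J}>0$, independent of $P^J$, with $F^J>0$ whenever $\tau<\tau_{*,J}$. With $F^J>0$ secured and \eqref{cytj} in force, Lemma \ref{lemma2} shows that $\phi^{J+1}$ is a continuous self-map of the compact convex set $\mathcal C^{J+1}$ of \eqref{set-JJ}; Brouwer's theorem then yields a fixed point $P^{J+1}\in\mathcal C^{J+1}$, and by the equivalence in \eqref{UphiJJ} this fixed point solves \eqref{EAFE-non-2} at $t=t^{J+1}$, giving existence at that layer.

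To close the induction I would re-use estimate \eqref{PJge0} from the proof of Lemma \ref{lemma2}: since $A(P^{J+1})$ is a column M-matrix by Lemma \ref{corollary-2}, the matrix $B=(A^T(P^{J+1}))^{-1}$ is nonnegative and nonsingular, so each component $\bar p^i_k=(B^T F^J)_k$ is \emph{strictly} positive because $F^J>0$. As the index set is finite, $C_{p^{J+1}}:=\min_{k,i}p^{J+1,i}_k>0$, which is \eqref{pn-J} at level $J+1$ and feeds the next step. Iterating from $J=0$ to $J=N-1$ delivers existence at every $t^n$. The same chain applies verbatim to the standard FE and SUPG systems \eqref{FEM-non} and \eqref{SUPG-non}, since the reduction to \eqref{EAFE-non-2} and all three lemmas used only the column M-matrix property of Assumption \ref{assum-M}, which Lemma \ref{corollary-2} establishes for EAFE under \eqref{cytj}.

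The step I expect to be most delicate is the bookkeeping of the time step $\tau$: the threshold $\tau_{*,J}$ from Lemma \ref{lemma1} depends on $C_{p^J}$, which is itself determined only after the solution at level $J$ has been constructed, so the admissible $\tau$ is known only a posteriori at each layer. Because the partition has finitely many layers, the clean resolution is to generate $C_{p^0},\tau_{*,0},C_{p^1},\tau_{*,1},\dots$ sequentially and then require $\tau<\min_{0\le J\le N-1}\tau_{*,J}$; one must verify this finite minimum is positive, which holds since each $C_{p^J}>0$ is produced at the previous stage. I would state this smallness-of-$\tau$ requirement explicitly so that the hypotheses of Lemmas \ref{lemma1} and \ref{lemma2} are met uniformly across all layers.
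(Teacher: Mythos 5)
Your proposal follows essentially the same route as the paper's own proof: induction over the time levels, with Lemma \ref{lemma1} supplying $F^J>0$, Lemma \ref{lemma2} plus Brouwer's fixed point theorem giving existence at each layer, and the column M-matrix property (via $B=(A^T(P^{J+1}))^{-1}\ge 0$ acting on $F^J>0$) propagating the strict positivity \eqref{pn-J} that closes the induction. Your explicit requirement $\tau<\min_{0\le J\le N-1}\tau_{*,J}$, with the thresholds generated sequentially, is a careful refinement of a point the paper leaves implicit, but it does not alter the structure of the argument.
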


\begin{proof}
	We prove Theorem \ref{th-1} by the mathematical induction. First we show the solution of the nonlinear system  \eqref{UphiJJ} exists on the time step $t=t^1$.
	Since \eqref{pn} is the case $J=0$ in \eqref{pn-J}, combining \eqref{Fi}, from Lemma \ref{lemma1} we obtain \eqref{FJge0}.  Using \eqref{cytj}, \eqref{FJge0} and Lemma \ref{lemma2}, we have $\phi^1$ is a continuous mapping from~$\mathcal C^1\rightarrow \mathcal C^1$, which combining with Brouwer fixed point theorem yields the solution $P^1$ of ~\eqref{UphiJJ} exists,  i.e.
	\begin{equation*}\label{eq1J}
	A (P^{1})  P^{1} =F^{0}.
	\end{equation*}
	Since $F^0>0$ and $(A(P^{1}))^T$ is an M-matrix, using the similar arguments in the proof of \eqref{PJge0}, we get the element $p^{i,1}_{k}$ of $P^{1}$ satisfies
	\begin{equation}\label{P1Jge0-1}
	p^{i,1}_{k}= ( B_{1}^TF^0)_k
	>0,~B_{1}=((A(P^{1}))^T)^{-1},~ k=1,\ldots, {n_h},i=1,2.
	\end{equation}
	Hence there exists a positive constant $C_{p^{1}}$ such that
	\begin{equation*}\label{p1}
\min_{k\in\{1,\ldots, n_h\},i= 1,2 }  p^{i,{1} }_k \ge C_{p^{1}}>0,
\end{equation*}
	where $ p^{i,{1} }_k$ is the element of $P^{1}$.
	
	Assuming the solution of \eqref{UphiJJ} exists on $t=t^m(m\ge 1)$ and
	\begin{equation}\label{pJ}
\min_{k\in\{1,\ldots, n_h\},i= 1,2 }  p^{i,{m}}_k \ge C_{p^{m}}>0,
\end{equation}
where~$C_{p^{m}}$ is a constant independent of $P^{m}$, we shall show the solution also exists on $t=t^{m+1}$ and  \eqref{pJ} holds.

From \eqref{Fi}, \eqref{pJ} and setting $J=m$, then we get \eqref{FJge0} by Lemma \ref{lemma1}. Further, using \eqref{cytj}, \eqref{FJge0} and Lemma \ref{lemma2}, we have $\phi^{m+1}$ is a continuous mapping from~$\mathcal C^{m+1}\rightarrow \mathcal C^{m+1}$. Thus, using Brouwer fixed point theorem, it follows that the solution of \eqref{UphiJJ} exists on $t=t^{m+1}$, that is
	\begin{equation*}
	A (P^{m+1})  P^{m+1} =F^{m}.
	\end{equation*}
	Then similar as the proof of \eqref{P1Jge0-1} for the case $t=t^1$, we get any element $p^{i,m+1}_{k}$ of $P^{m+1}$ satisfies
	\begin{equation*}
 p^{i,m+1 }_{k}= ( B_{m+1}^TF^0)_k
>0,~B_{m+1}=((A(P^{m+1}))^T)^{-1},~ k=1,\ldots, {n_h},i=1,2,
\end{equation*}
	Hence there is a positive constant $C_{p^{m+1}}$ such that
	\begin{equation*}\label{p1}
\min_{k\in\{1,\ldots, n_h\},i= 1,2 }  p^{i,{m+1} }_k \ge C_{p^{1}}>0,
\end{equation*}
	where $ p^{i,{m+1} }_k$ is the element of  $P^{m+1}$.
	This completes the proof of the lemma.
\end{proof}

\begin{remark}\label{init-assume}
	
	{\color{black}
Theorem \ref{th-1} shows that the existence of the solution to nonlinear system \eqref{EAFE-non} is based on \eqref{pn}, which can be removed essentially.}
\end{remark}	
	Next,  we provide a detailed explanation on how to obtain the result of Theorem \ref{th-1}  without using assumption \eqref{EAFE-non}.
Assume the initial function
	$p^{i}_0\in C(\bar \Omega)$.
	Let
	\begin{equation}\label{tildep}
	\tilde p^{i}({\bf x},t) = p^{i} ({\bf x},t)+C_0,
	\end{equation}
	where the positive constant ~$ C_0:=\max\limits_{{\bf x}\in\bar\Omega} \{|p^{i,0}_{\bf x}|\}+C_{p^0}$.
	Due to the difference of only one constant between~$p^{i}$ and~$\tilde p^{i}$, it is only need to prove the existence of $\tilde p^{i}$ to show the existence of ~$p^{i}$.
	
	It is easy to know $\phi$ and $\tilde{p}^i$ satisfy the following equations
	\begin{equation}\label{pnp-equation-1}
	\left\{\begin{array}{lr}
	-\Delta  \phi-\sum\limits^{2}_{i=1}q^{i}\tilde p^{i}=f ,  &\text { in } \Omega,~\text{for}~t\in(0, T],  \\
	{\partial_t  \tilde p^{i} }-\nabla\cdot(\nabla \tilde p^{i}  + q^{i} \tilde p^{i} \nabla  \phi   ) +q^{i}C_0\sum\limits^{2}_{j=1}q^{j}\tilde p^{j }  =\tilde F^i, & \text { in } \Omega,~\text{for}~t\in(0, T], i=1,2,\vspace{1mm}
	\end{array}\right.
	\end{equation}
	where~$\tilde F^i:=F^i- q^{i}C_0 f$, with the initial-value conditions
	\begin{equation}\label{boundary-1-0}
	\left\{
	\begin{array}{lr}
	\phi=0,\text{on}\;\partial\Omega,& \text{for}~t\in(0, T],\vspace{0.5mm}\\
	\tilde p^i=C_0,\;\text{on}\;\partial\Omega,&\text{for}~t\in(0, T],\\
	\tilde p^{i}({\bf x},0)=p^{i}({\bf x},0)+ C_0 ,&\text{for}~{\bf x} \in \Omega.
	\end{array}
	\right.
	\end{equation}
	We know that  $\tilde p^{i}({\bf x},t)$ satisfies
	\begin{equation}\label{pn-1}
	\min_{k\in\{1,\ldots, n_h\},i= 1,2 }  \tilde{p}^{0,i}_k \ge C_{p^0}>0,
	\end{equation}
	where  $\tilde{P}^0=(\tilde{p}^{0,1}_1,\ldots,\tilde{p}^{0,1}_{n_h},\tilde{p}^{0,2}_1,\ldots,\tilde{p}^{0,2}_{n_h})^T$ is the vector composed of the initial ion concentration values at nodes and  $C_{p^0}$ is a certain positive constant.
	
	Let the vector of degree of freedoms of $\tilde p^{i}$ be
	\begin{eqnarray}
	\tilde P=
	\left(
	\begin{array}{c}
	\tilde P_{b} \\
	\tilde P_{I}\\
	\end{array}
	\right),
	\end{eqnarray}
	where $\tilde P_{b} $  and $\tilde P_{I}$ are vectors of degrees of freedoms corresponding to the set of boundary nodes and the set of inner nodes.
	
	Similar as the deduction of the EAFE scheme \eqref{EAFE-non} from \eqref{pnp equation}, using \eqref{pnp-equation-1}-\eqref{boundary-1-0} and noting the boudary condition $\tilde p^i=C_0$ in~\eqref{boundary-1-0} can be written as $\alpha\tilde p^i=\alpha C_0$ ($\alpha$ is any constant but not zero),  it is easy to know that the discrete system corresponding to the backward Euler-EAFE scheme of \eqref{pnp-equation-1}-\eqref{boundary-1-0} is as follows
	\begin{eqnarray}\label{AUF}
\left(
  \begin{array}{cc}
    A_L& \bar M \\
   {\bf 0 }   &   \hat A(\tilde P)  \\
  \end{array}
\right)
\left(
  \begin{array}{c}
      \Phi\\ \tilde P
  \end{array}
\right)=\left(
  \begin{array}{c}
    G_\Phi^{n+1}\\
   \tilde F^n
  \end{array}
\right),
\end{eqnarray}
	Here $A_L$, $\bar M$, $\Phi$ and $G_\Phi^{n+1}$ are defined in \eqref{C4-AL-def}, \eqref{C4-M-def} and  \eqref{def-PhiG3}, respectively,  $2(n_{h}+n_{bd})\times 2(n_{h}+n_{bd})$ matrix and $2(n_{h}+n_{bd})$ dimensional vector
	\begin{equation}\label{tidleAF}
	\hat A(\tilde P)= \left(
	\begin{array}{cc}
	\alpha I_b  & {\bf 0} \\
	A_{b}(\tilde P)   &   {A}_I(\tilde P) \\
	\end{array}
	\right),~ \tilde F^n=
	\left(
	\begin{array}{c}
	\alpha\tilde P_e \\
	\tilde F_I^n \\
	\end{array}
	\right),
	\end{equation}
	where $n_{bd}$ is the number of nodes on the boundary, $I_b$ is $2n_{bd}\times 2n_{bd}$ identical matrix,
	$2n_{h}\times 2n_h$ matrix $A_{b}(\tilde P)$ is nonpositive matrix assembled from ~$\tau\tilde A^{i,K}(P)$, $2n_{h}\times 2n_h$ matrix~$ {A}_I(\tilde P)=A(P)+C_0\tau\check{M}$, ${\check M} =\begin{pmatrix}
	M&-M
	\\
	-M &M
	\end{pmatrix}$,   $\tilde P_e$ is the vector composed by the values of the exact solution on boundary nodes, and the right-hand vector
	$$\tilde F_I^n  = \tau\tilde G^{n+1}+ {\bf M} \tilde P^n,
	~\tilde G^{n+1}=((\tilde F^{1,n+1},\psi_1),\ldots,(\tilde F^{1,n+1},\psi_{n_h}),(\tilde F^{2,n+1},\psi_1),\ldots,(\tilde F^{2,n+1},\psi_{n_h}))^T.$$

	From Lemma~\ref{corollary-2} and~$ {A}_I(\tilde P)=A(P)+C_0\tau\check{M}$, we have~${A}_I(\tilde P)$ is a column M-matrix, which combining with the definition of~$\hat A(\tilde P)$ given in~\eqref{tidleAF} yields that if choose
	\begin{equation}\label{def-alpha}
	\alpha\ge \|A_{b}(\tilde  P)\|_1,
	\end{equation}
	then~$\hat A(\tilde  P)$ is a column M-matrix.

In summary,  the result of Theorem \ref{th-1} can be obtained under the assumptions of \eqref{cytj} and \eqref{Fi} and without using the condition (\ref{pn}).

The existence of the solution to FE or SUPG schemes can been obtained by following the similar arguments in the proof of the existence of the solution to the EAFE scheme.

Note that Gummel iteration is a commonly used iteration to solve nonlinear scheme (\ref{fulldis2-EAFE})-(\ref{fulldis-EAFE}) (cf. e.g. \cite{zhang2022class, SUPG-IP, zheng2011second}). In next section, we introduce the Gummel algorithm combining with EAFE scheme for PNP equaitons, and present the contraction and convergence theory for the solution of the algorithm.
\vspace{3mm}
\subsection{The convergence analysis for the Gummel iteration}
In this section, we will present the convergence analysis for the Gummel iteration.
 For $t=t^{n+1},~n\geq 0$, the Gummel iteration for the EAFE scheme (\ref{fulldis2-EAFE})-(\ref{fulldis-EAFE}) are as follows.

 \begin{algorithm}[!h]
 	\caption{Gummel iteration}
 	\label{gummalgo}
 	Step 1. Give the initial value $(p_h^{1,n},p_h^{2,n},\phi_h^n)\in [V_h]^3$, let $(p_h^{1,n+1,0}, p_h^{2,n+1,0},\phi_h^{n+1,0})$ $=(p_h^{1,n},p_h^{2,n},\phi_h^n)$ as $l=0$.\\
 	Step 2. For $l\geq 0$, compute $(p_h^{1,n+1,l+1}, p_h^{2,n+1,l+1},\phi_h^{n+1,l+1})\in [V_h]^3$, such that for any $v_{h}$ and $ w_h\in V_h$,
 	\begin{align}\label{gumm2}
 	&(\nabla\phi_h^{n+1,l+1},\nabla w_h)-\sum\limits^{2}_{i=1}q^{i}(p^{i,n+1,l}_h,w_h)=(f^{3,n+1},w_h),
 	\\ \label{gumm1}
 	&\frac 1 {\tau}(p_h^{i,n+1,l+1},v_h)+ \sum\limits_{K\in \mathcal{T}_{h}}[\sum\limits_{E\subset K}\omega_E^K\tilde{\alpha}^{K,i}_{E}(\phi_h^{n+1,l+1})\delta_E(e^{q^i\phi_h^{n+1,l+1}}p_h^{i,n+1,l+1})\delta_Ev_{h}]
 	\nonumber\\
 	&~~~~~~=(F^{i,n+1},v_h)+\frac 1 {\tau}(p^{i,n}_h,v_h) ,~i=1,2 .
 	\end{align}
 	
 	Step 3. For a given tolerance $\epsilon$, stop the iteration if
 	\begin{align}\label{tol}
 	\|p_h^{1,n+1,l+1}-p_h^{1,n+1,l}\|+\|p_h^{2,n+1,l+1}-p_h^{2,n+1,l}\|+
 	\|\phi_h^{n+1,l+1}-\phi_h^{n+1,l}\|\leq\epsilon,
 	\end{align}
 	and set $(p_h^{1,n+1},p_h^{2,n+1},\phi^{n+1}_h)=(p_h^{1,n+1,l+1},
 	p_h^{2,n+1,l+1},\phi_h^{n+1,l+1})$. Otherwise set $l\leftarrow l+1$ and goto Step 2 to continue the iteration.
 \end{algorithm}

To show the convergence of the solution to Algorithm 3.1, first we present the algebraic formulation of Algorithm \ref{gummalgo}. Let the linear finite element functions of \eqref{gumm2} and \eqref{gumm1} are
$$\phi_h^{n+1,l+1}=\Psi \Phi^{n+1,l+1}, ~ p_h^{i,n+1,\mu}=\Psi P^{i,n+1,\mu},~ i=1,2,~\mu=l,l+1,$$
respectively, where the basis function vector $\Psi$  is given by \eqref{Psi}. The degrees of freedom vector of electrostatic potential and concentration are defined as follows
\begin{equation}\label{def-P-l+1}
\Phi^{n+1,l+1}=(\Phi^{n+1,l+1}_1,\ldots,\Phi^{n+1,l+1}_{n_h})^T,
~
P^{i,n+1,\mu}=(p^{i,n+1,\mu}_1,\ldots,p^{i,n+1,\mu}_{n_h})^T.
\end{equation}

Setting $w_h$ and $v_h$ in ~\eqref{gumm2}-\eqref{gumm1} to be the basis functions, using \eqref{def-P-l+1} and denoting by
 \begin{eqnarray}\label{EAFE-non-dis-1}
F_\Phi^{n+1}(P):=G_\Phi^{n+1}- \bar MP,
\end{eqnarray}
then the linear algebraic equations corresponding to ~\eqref{gumm2}-\eqref{gumm1} can be written as
	\begin{eqnarray}\label{EAFE-non-dis}
	\left\{
	\begin{array}{ll}
	A_L  \Phi^{n+1,l+1}= F_\Phi^{n+1}(P^{n+1,l}) , \\
	({\bf M} + \tau \bar A(\Phi^{n+1,l+1}))P^{n+1,l+1}= F^n ,
	\end{array}
	\right.~P^{n+1,\mu}=
	\begin{pmatrix}
	P^{1,n+1,\mu}\\
	P^{2,n+1,\mu}
	\end{pmatrix}
	,~\mu=l,l+1,
	\end{eqnarray}
where matrices $A_L$, $\bar A(\cdot)$, $\bf M$ and  $\bar M$ are defined in \eqref{C4-AL-def}  and \eqref{C4-M-def}, respectively, vectors $G_\Phi^{n+1} $ and $F^n$ are given by \eqref{def-PhiG3}  and \eqref{def-F}, respectively.

Similar as the deductions of~\eqref{tildebar} and \eqref{UphiJJ}, using \eqref{EAFE-non-dis} and \eqref{EAFE-non-2-AP}, we have
\begin{equation}\label{tildebar-dis}
\Phi^{n+1,l+1}(P^{n+1,l})=G_A^{n+1}-\bar BP^{n+1,l},~\bar B:= A_L^{-1}\bar M ,~G_A^{n+1}:=A_L^{-1}G_\Phi^{n+1}
\end{equation}
and
\begin{eqnarray}\label{EAFE-non-2-dis}
P^{n+1,l+1}=\phi^{n+1}(P^{n+1,l}),~\phi^{n+1}(P^{n+1,l}):=(A (P^{n+1,l}))^{-1}F^n.
\end{eqnarray}

Let ${\tilde C}_k^{0}:=\|P^0\|_\infty$ be a compact convex set and
\begin{equation}\label{set-JJ-3}
\mathcal{\tilde C}^{n}=
\{ P^l~| P^l\in \mathbb{R}^{2{n_h}}, p_k^{l,i}\in[0,{\tilde C}_k^{n}],~k=1,2,\ldots,{n_h},i=1,2\},~{\tilde C}_k^{n}=\max\{{\tilde C}_k^{n-1},{ C}_k^{n}\} ,~ n= 1,\ldots, N ,
\end{equation}
where the positive contant $C^{n}_k,~n=0,\ldots, N-1$ defined by \eqref{def-CJ}. From \eqref{set-JJ-3} and \eqref{def-CJ}, it is obvious to have
\begin{equation}\label{tildeC-C}
\mathcal { \tilde C}^{n-1}\subseteq\mathcal { \tilde C}^{n}~\mbox{and}~\mathcal { C}^{n}\subseteq\mathcal { \tilde C}^{n},~n= 1,\ldots, N .
\end{equation}

Next, in order to show the convergence of Algorithm \ref{gummalgo}, some lemmas needed are presented as follows.

\begin{lemma}\label{tildeC-P-l}
For any time step $t=t^{n},n\in\{1,\ldots,N\}$, if the Gummel iterative vector $P^{n,l}$ defined in \eqref{EAFE-non-dis} satisfies
	\begin{equation}\label{assume}
	P^{n,l}\in\mathcal { \tilde C}^{n},~\forall l \ge 0,
	\end{equation}
	then there exists a positive constant $C_L$ independent of $\{P^{n,\mu},\mu\ge 0\}$, such that the stiffness matrix $\tilde A(P)$ in \eqref{EAFE-non-2-AP} satisfies the following Lipschitz condition	\begin{eqnarray}\label{lp-l}
	\|\tilde A(P^{n,l})-\tilde A(P^{n,l-1})\|_{\infty}\le C_L \|P^{n,l}-P^{n,l-1}\|_{\infty},~\forall l \ge 1.
	\end{eqnarray}

	\begin{proof}
		In fact, since the diagonal block submatrix $\tilde A^i(P), i=1,2$ of $\tilde A(P)$ in  \eqref{EAFE-non-2-AP} is assembled by the element stiffness matrix~$\tilde A^{i,K}(P):=(\tilde  a_{\nu \mu }^{i,K}(P))_{4\times 4}$, to prove \eqref{lp-l}, it suffices to show there is a positive constant $C_L$ independent of  ~$\{P^{n,\mu},\mu\ge 0\}$, such that
		\begin{eqnarray}\label{lp-l1}
		|\tilde  a_{\nu \mu }^{i,K}(P^{n,l})-\tilde  a_{\nu \mu }^{i,K}(P^{n,l-1})| \le C_L \|P^{n,l}-P^{n,l-1}\|_{\infty},~\forall l \ge 1.
		\end{eqnarray}
		
		Next, we only give the discussion for the case $i=1$, and the case where i=2 is also similar. Let
		\begin{eqnarray}\label{tildeC-l}
		\eta_r ({\bf x}) =q^1\lambda^K({\bf x} ) \Phi_K^{n,r+1}(P^{n,r}),~r=l-1,l.
		\end{eqnarray}
		Using   \eqref{aij}  and \eqref{[1](3.10)}, for $\nu\ne \mu$ we have
		\begin{eqnarray}\label{a-a-l}
		|\tilde a_{{\nu \mu }}^{1,K} (P^{n,l} )-\tilde a_{{\nu \mu }}^{1,K}(P^{n,l-1}  )|
		\notag&\le &|\omega^K_{E_{\nu \mu }}|~ |\left[\frac{1}{|\tau_{E_{\nu \mu }}|}\int_{E_{\nu \mu }} e^{\eta_l (\bf x)}\mathrm{d}s\right]^{-1}e^{  \eta_l(q_{\mu })}-\left[\frac{1}{|\tau_{E_{\nu \mu } }|}\int_{E_{\nu \mu } } e^{  \eta_{l-1}({\bf x}) }\mathrm{d}s\right]^{-1}e^{  \eta_{l-1} (q_{\mu })}|
		\notag\\&&
		\notag\\&\le&|\omega^K_{E_{\nu \mu }}|~ |(e^{ \min\limits_{{\bf x}\in {E_{\nu \mu }}}  \eta_l ({\bf x}) })^{-1}e^{ \eta_l  (q_{\mu })}-(e^{ \max\limits_{{\bf x}\in {E_{\nu \mu }}}   \eta_{l-1} ({\bf x})   })^{-1}  e^{  \eta_{l-1}(q_{\mu })}|\label{delta-tilde-alpha-e}
		\notag\\&&
		\notag\\&\le & \frac{1}{3}   C_L^1   |e^{ \eta_l(q_{\mu })} -e^{  \eta_{l-1}(q_{\mu })}|
		\label{a-a-1}
		\end{eqnarray}
	 where $C_L^1=3\max\limits_{E_{\nu \mu }} \{|\omega^K_{E_{\nu \mu }}|\min\{(e^{ \min\limits_{{\bf x}\in {E_{\nu \mu }}} \eta_l ({\bf x} )})^{-1},(e^{ \max\limits_{{\bf x}\in {E_{\nu \mu }}} \eta_{l-1} ({\bf x})})^{-1} \}\}
	$.
		
		Next we show $C_L^1$ is independent of $P^{n,r},~r=l-1,l$. From \eqref{omega}, \eqref{tildeC-l} and the fact $\|\lambda^K({\bf x})\|_\infty\le 1$, it only requires  proving  there exists a positve constant $\hat C_L^1$ independent of $P^{n,r},r=l-1,l$, such that
		\begin{eqnarray}\label{tildeC1-l}
		\|\Phi^{n,r+1}\|_{\infty}\le  \hat  C_L^1,~r=l-1,l.
		\end{eqnarray}
		From  \eqref{def-PhiG3}, \eqref{C4-M-def},  \eqref{assume} and \eqref{set-JJ-3}, we get
		\begin{eqnarray}
		\|G_\Phi^{n}- \bar MP^{n,r}\|_{\infty}\le C_G+ \frac{\max\limits_k|\Omega_k|}{4}\| P^{n,r}\|_{\infty}
		\le C^F_0
		,~r=l-1,l,\label{FPHI-0-l}
		\end{eqnarray}
		where $C^F_0=C_G+ \frac{\max\limits_k|\Omega_k|}{4}\max\limits_k \{  { \tilde C}^{n+1}_{k}\}$ is a positive constant independent of $P^{n,r},r=l-1,l$.
		
		From \eqref{EAFE-non-dis}, we know $  \Phi^{n,r+1}= A_L^{-1}F_\Phi^{n}(P^{n,r})$. Then using  \eqref{EAFE-non-dis-1} and \eqref{FPHI-0-l}, and noting  $\|A_L^{-1}\|_{\infty}$ is only related to $h$, we obtain \eqref{tildeC1-l}.
		
	Next, we estimate \eqref{a-a-1} $ |e^{ \eta_l(q_{\mu })} -e^{  \eta_{l-1}(q_{\mu })}|$. Let $a  =  \min\{   \eta_1 (q_{\mu }),\eta_2  (q_{\mu })  \},~b=\max\{   \eta_1  (q_{\mu })  , \eta_2 (q_{\mu })\}$. From Lagrange mean value theorem, \eqref{tildeC-l} and \eqref{tildebar-dis},
	 it follows that
		\begin{eqnarray}\notag
		|e^{ \eta_l(q_{\mu })} -e^{  \eta_{l-1}(q_{\mu })}|
		&=&  { \color{black}{ | \eta_l (q_{\mu }) -\eta_{l-1} (q_{\mu })  | }\frac{|e^{ \eta_l  (q_{\mu })} -e^{  \eta_{l-1}(q_{\mu })}| }{ | \eta_l  (q_{\mu }) -\eta_{l-1}  (q_{\mu })  | }}
		\notag\\&=&   {| \eta_l  (q_{\mu }) -\eta_{l-1} (q_{\mu })  |}~|(e^\xi)'|
		\notag\\&\le & \| e^{   x}  \|_{L^{\infty}(a,b) }  ~{ |    \lambda^K(q_{\mu })(\Phi_K^{n,l +1}(P^{n,l })-\Phi_K^{n,l } (P^{n,l-1}))  | }
		\notag\\&\le & 4\| e^{  x}  \|_{L^{\infty}(a,b) } ~\| \lambda^K \|_\infty~{ \|   ( \bar B( P^{n,l} - P^{n,l-1}))^K \|_{\infty}} 
		\notag
		\notag\\&  \le &  C_L^2\|  P^{n,l} - P^{n,l-1} \|_{\infty}
		,\label{a-a-3}
		\end{eqnarray}
	where $C_L^2=4 \| e^x  \|_{L^{\infty}(a,b) }\|\bar B\|_\infty$. Noting that $\|\bar B\|_\infty$ is a positive constant only dependent of $h$, we have $C_L^2$ is independent of $P^{n,r},~r=l-1,l$.

		Inserting \eqref{a-a-3} into \eqref{a-a-1} and setting $C_L=C_L^1C_L^2$,  we have \eqref{lp-l1} holds when $\nu\ne\mu$. Then from \eqref{aij}, when $\nu=\mu$, it yields
		{\small\begin{eqnarray*}
				|\tilde a_{{\mu \mu }}^{1,K} (P^{n,l} )-\tilde a_{{\mu \mu }}^{1,K}(P^{n,l-1}  )|
				\le\sum\limits_{k \ne \mu} |\tilde a_{{k \mu }}^{1,K} (P^{n,l})-\tilde a_{{k \mu }}^{1,K}(P^{n,l-1}  )|
				\le     C_L \|P^{n,l}-P^{n,l-1}\|_{\infty},
		\end{eqnarray*}}
		thus we get \eqref{lp-l1}. This complete the proof of this lemma.
	\end{proof}
\end{lemma}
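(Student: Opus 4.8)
The plan is to reduce the matrix Lipschitz estimate to an entrywise estimate on the element stiffness matrices, and then to control everything through the exponential structure of the EAFE entries together with a uniform bound on the potential that the boundedness hypothesis $P^{n,l}\in\mathcal{\tilde C}^n$ furnishes. First I would observe that $\tilde A(P)=\mathrm{diag}(\tilde A^1(P),\tilde A^2(P))$ and that each block is assembled from the element matrices $\tilde A^{i,K}(P)=(\tilde a^{i,K}_{\nu\mu}(P))_{4\times4}$ by a fixed, $P$-independent assembly operator. Since assembly only sums a bounded number of element contributions into each global entry, it suffices to prove the entrywise bound $|\tilde a^{i,K}_{\nu\mu}(P^{n,l})-\tilde a^{i,K}_{\nu\mu}(P^{n,l-1})|\le C_L\|P^{n,l}-P^{n,l-1}\|_\infty$ with $C_L$ independent of the iterates; by the symmetry between $i=1$ and $i=2$ I would carry out the argument only for $i=1$.

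For the off-diagonal entries ($\nu\neq\mu$), using \eqref{aij} together with \eqref{[1](3.10)}, each entry equals $-\omega^K_{E_{\nu\mu}}\,\tilde\alpha^{K,1}_{E_{\nu\mu}}(\Phi^{n+1})\,e^{q^1\lambda^K(q_\mu)\Phi^{n+1}_K}$, i.e. the product of the fixed constant $\omega^K_{E_{\nu\mu}}$ with a reciprocal edge-average of a positive exponential and a nodal exponential. Introducing $\eta_r(\mathbf x)=q^1\lambda^K(\mathbf x)\Phi^{n,r+1}_K(P^{n,r})$, I would bound the reciprocal average crudely by replacing it with the reciprocal of $e^{\min_E\eta}$ (resp. $e^{\max_E\eta}$); this lets me factor the difference of the two entries and reduce the whole estimate to controlling $|e^{\eta_l(q_\mu)}-e^{\eta_{l-1}(q_\mu)}|$ times a prefactor $C_L^1$.

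The crux, and the step I expect to be the main obstacle, is making the resulting constant independent of the iterates, which is exactly where the hypothesis $P^{n,l}\in\mathcal{\tilde C}^n$ enters. From \eqref{EAFE-non-dis} one has $\Phi^{n,r+1}=A_L^{-1}F_\Phi^n(P^{n,r})$ with $F_\Phi^n(P)=G_\Phi^n-\bar MP$, so the boundedness of $P^{n,r}$ on the compact set $\mathcal{\tilde C}^n$ together with $\|A_L^{-1}\|_\infty$ depending only on $h$ yields a uniform bound $\|\Phi^{n,r+1}\|_\infty\le\hat C_L^1$; this bounds $\eta_l,\eta_{l-1}$ in $L^\infty$, hence bounds the prefactor $C_L^1$ and confines $\eta_l(q_\mu),\eta_{l-1}(q_\mu)$ to a fixed interval $[a,b]$. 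I would then apply the Lagrange mean value theorem to $e^x$ on $[a,b]$, writing $|e^{\eta_l(q_\mu)}-e^{\eta_{l-1}(q_\mu)}|=|\eta_l(q_\mu)-\eta_{l-1}(q_\mu)|\,|e^\xi|\le\|e^x\|_{L^\infty(a,b)}\,|\eta_l(q_\mu)-\eta_{l-1}(q_\mu)|$.

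Finally, the linear relation \eqref{tildebar-dis} gives $\Phi^{n,l+1}-\Phi^{n,l}=-\bar B(P^{n,l}-P^{n,l-1})$, whence $|\eta_l(q_\mu)-\eta_{l-1}(q_\mu)|\le\|\lambda^K\|_\infty\|\bar B\|_\infty\|P^{n,l}-P^{n,l-1}\|_\infty$ with $\|\bar B\|_\infty$ depending only on $h$; combining the estimates yields the off-diagonal bound with $C_L=C_L^1C_L^2$. For the diagonal entries ($\nu=\mu$), formula \eqref{aij} expresses the diagonal entry as the negative column sum of the off-diagonals, so the triangle inequality promotes the off-diagonal estimates to the diagonal one. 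This establishes the entrywise Lipschitz bound, and summing over the fixed assembly pattern completes the proof of \eqref{lp-l}.
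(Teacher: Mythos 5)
Your proposal is correct and follows essentially the same route as the paper's own proof: reduction to an entrywise estimate on the element matrices for $i=1$, bounding the reciprocal edge-average by extremal exponentials, using the hypothesis $P^{n,l}\in\mathcal{\tilde C}^{n}$ together with $\Phi^{n,r+1}=A_L^{-1}F_\Phi^{n}(P^{n,r})$ and the $h$-dependence of $\|A_L^{-1}\|_\infty$, $\|\bar B\|_\infty$ to make the constants iterate-independent, the Lagrange mean value theorem on $e^{x}$ over a fixed interval, the linear relation \eqref{tildebar-dis} to convert potential differences into $\|P^{n,l}-P^{n,l-1}\|_\infty$, and the column-sum structure of \eqref{aij} to handle the diagonal entries. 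No meaningful deviation from the paper's argument.
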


Similar as the deduction of Lemma \ref{lemma2} and Property~\ref{pro1}, we have $\phi^{n+1}$ and $\tilde A(P)$ are continuous on $\mathcal {\tilde C}^{n+1}$. Hence $\phi^{n+1}$ and  $\tilde A(P^{n+1,l})$ are bouned on the compact convex set $\mathcal {\tilde C}^{n+1}$. Denote by 
\begin{equation}\label{C-Phi-tildeA-l}
C_{\Phi}^{\tilde A} =\max \{\|\tilde A(P^{n+1,l})\|_{\infty},\| \phi^{n+1}(P^{n+1,l})\|_{\infty},l=0,1,\ldots,\}.
\end{equation}

\begin{lemma}\label{le-infty-l}
Under the assumption of Lemma \ref{tildeC-P-l}, and setting
	\begin{equation}\label{def-tauC-l}
	\tau_C=\frac{\min\limits_{k}|\Omega_k|}{8 C_L  C_{\Phi}^{\tilde A}},
	\end{equation}
where $|\Omega_k|$, $C_L$ and $ C_{\Phi}^{\tilde A} $ are constants defined in \eqref{C4-M-def}, \eqref{lp-l} and \eqref{C-Phi-tildeA-l}, respectively, then for $\tau<\tau_C$, the mapping $\phi^{n+1}$ given in \eqref{EAFE-non-2-dis} is contractive, i.e.
	\begin{equation}\label{contraction-l}
	\|\phi^{n+1}(P^{n+1,l})-\phi^{n+1}(P^{n+1,l-1})\|_{\infty}\le \alpha_{n+1}\| P^{n+1,l}-  P^{n+1,l-1} \|_{\infty},
	\end{equation}
	where~$ \alpha_{n+1}=\frac{8  C_L  C_{\Phi}^{\tilde A} }{\min\limits_{k}|\Omega_k|}{\tau}\in(0,1)$ are a positive constant independent of $P^{n+1,l-1}$ and $P^{n+1,l}$.
\end{lemma}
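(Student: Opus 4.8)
The plan is to estimate the difference $\delta := \phi^{n+1}(P^{n+1,l}) - \phi^{n+1}(P^{n+1,l-1})$ directly from the two linear systems it satisfies, rather than through $A^{-1}$, so that the lumped mass matrix ${\bf M}$ and the uniform bounds already at hand can be exploited cleanly. First I would dispose of the claim $\alpha_{n+1}\in(0,1)$, which is immediate: all of $C_L$, $C_\Phi^{\tilde A}$ and the $|\Omega_k|$ are positive so $\alpha_{n+1}>0$, and the choice $\tau<\tau_C$ from \eqref{def-tauC-l} gives $\alpha_{n+1}=\frac{8 C_L C_\Phi^{\tilde A}}{\min_k|\Omega_k|}\tau<\frac{8 C_L C_\Phi^{\tilde A}}{\min_k|\Omega_k|}\,\tau_C=1$. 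The substantive part is the contraction estimate \eqref{contraction-l}.

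By the definition of the iteration map in \eqref{EAFE-non-2-dis}, both images solve a system with the \emph{same} right-hand side $F^n$, namely $A(P^{n+1,l})\phi^{n+1}(P^{n+1,l})=F^n$ and $A(P^{n+1,l-1})\phi^{n+1}(P^{n+1,l-1})=F^n$. Subtracting these and adding and subtracting $A(P^{n+1,l})\phi^{n+1}(P^{n+1,l-1})$ produces the error equation
\[
A(P^{n+1,l})\,\delta=-\bigl(A(P^{n+1,l})-A(P^{n+1,l-1})\bigr)\phi^{n+1}(P^{n+1,l-1}).
\]
Since $A(P)={\bf M}+\tau\tilde A(P)$ by \eqref{EAFE-non-2-AP}, the mass matrix cancels in the difference on the right, leaving the factor $\tau(\tilde A(P^{n+1,l})-\tilde A(P^{n+1,l-1}))$; moving the term $\tau\tilde A(P^{n+1,l})\delta$ to the right and multiplying by ${\bf M}^{-1}$ then gives
\[
\delta=-\tau{\bf M}^{-1}\tilde A(P^{n+1,l})\,\delta-\tau{\bf M}^{-1}\bigl(\tilde A(P^{n+1,l})-\tilde A(P^{n+1,l-1})\bigr)\phi^{n+1}(P^{n+1,l-1}).
\]

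Next I would take $\|\cdot\|_\infty$ on both sides and apply submultiplicativity together with the three ingredients already available: the Lipschitz bound \eqref{lp-l} of Lemma \ref{tildeC-P-l}, giving $\|\tilde A(P^{n+1,l})-\tilde A(P^{n+1,l-1})\|_\infty\le C_L\|P^{n+1,l}-P^{n+1,l-1}\|_\infty$; the uniform bounds $\|\tilde A(P^{n+1,l})\|_\infty\le C_\Phi^{\tilde A}$ and $\|\phi^{n+1}(P^{n+1,l-1})\|_\infty\le C_\Phi^{\tilde A}$ from \eqref{C-Phi-tildeA-l}; and the explicit value $\|{\bf M}^{-1}\|_\infty=4/\min_k|\Omega_k|$ read off from \eqref{C4-M-def}. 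This yields
\[
\|\delta\|_\infty\le \frac{4\tau C_\Phi^{\tilde A}}{\min_k|\Omega_k|}\,\|\delta\|_\infty+\frac{4\tau C_L C_\Phi^{\tilde A}}{\min_k|\Omega_k|}\,\|P^{n+1,l}-P^{n+1,l-1}\|_\infty.
\]
For $\tau<\tau_C$ the coefficient of $\|\delta\|_\infty$ on the right is at most $\tfrac12$, so it can be absorbed into the left-hand side, and multiplying through by $2$ recovers exactly $\|\delta\|_\infty\le\frac{8\tau C_L C_\Phi^{\tilde A}}{\min_k|\Omega_k|}\|P^{n+1,l}-P^{n+1,l-1}\|_\infty=\alpha_{n+1}\|P^{n+1,l}-P^{n+1,l-1}\|_\infty$, which is \eqref{contraction-l}.

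The hard part will be the self-referential term $\tau{\bf M}^{-1}\tilde A(P^{n+1,l})\delta$: the whole argument closes only if $\tau\|{\bf M}^{-1}\|_\infty\|\tilde A(P^{n+1,l})\|_\infty\le\tfrac12$, and matching this to the prefactor $\tfrac18$ built into $\tau_C$ is precisely what pins down both the threshold $\tau_C$ and the constant $8$ in $\alpha_{n+1}$ (it is here that $C_\Phi^{\tilde A}$ is used to bound $\|\tilde A(P^{n+1,l})\|_\infty$, and implicitly that $C_L$ may be taken $\ge1$). Two supporting points must also be verified before invoking the auxiliary results: that the hypothesis of Lemma \ref{tildeC-P-l}, namely $P^{n+1,l}\in\mathcal{\tilde C}^{n+1}$ for all $l\ge0$, genuinely holds so that \eqref{lp-l} is applicable, and that the constant $C_\Phi^{\tilde A}$ of \eqref{C-Phi-tildeA-l} is finite, which follows from the continuity of $\tilde A$ and $\phi^{n+1}$ on the compact convex set $\mathcal{\tilde C}^{n+1}$ noted just before the statement.
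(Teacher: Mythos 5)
Your proof is correct and arrives at exactly the paper's constants, but it closes the estimate through a different decomposition than the paper uses. You write the error equation one-sidedly, $A(P^{n+1,l})\delta=-\tau\bigl(\tilde A(P^{n+1,l})-\tilde A(P^{n+1,l-1})\bigr)\phi^{n+1}(P^{n+1,l-1})$, invert the lumped mass matrix, and absorb the self-referential term $\tau{\bf M}^{-1}\tilde A(P^{n+1,l})\delta$ using $\|{\bf M}^{-1}\|_\infty=4/\min_k|\Omega_k|$ and, for $\tau<\tau_C$, the coefficient bound $4\tau C_\Phi^{\tilde A}/\min_k|\Omega_k|<1/(2C_L)\le\tfrac12$. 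The paper instead symmetrizes: it rewrites the difference of the two systems as
\begin{equation*}
\Bigl[{\bf M}+\tfrac{\tau}{2}\bigl(\tilde A(P^{n+1,l})+\tilde A(P^{n+1,l-1})\bigr)\Bigr]\delta
=\tfrac{\tau}{2}\bigl(\tilde A(P^{n+1,l-1})-\tilde A(P^{n+1,l})\bigr)\bigl(\phi^{n+1}(P^{n+1,l})+\phi^{n+1}(P^{n+1,l-1})\bigr),
\end{equation*}
bounds the left side from below by $\tfrac18\min_k|\Omega_k|\,\|\delta\|_\infty$ (triangle inequality plus $\tau C_\Phi^{\tilde A}\le\tfrac18\min_k|\Omega_k|$), and the right side from above by $\tau C_LC_\Phi^{\tilde A}\|P^{n+1,l}-P^{n+1,l-1}\|_\infty$ via \eqref{lp-l} and \eqref{C-Phi-tildeA-l}. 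The ingredients are identical in both routes---the Lipschitz constant $C_L$, the uniform bound $C_\Phi^{\tilde A}$ on both $\tilde A$ and $\phi^{n+1}$, the diagonal structure of ${\bf M}$, and the normalization $C_L\ge1$, which the paper invokes explicitly (``setting $C_L\ge 1$'') and which you correctly flagged as the implicit assumption needed to make the absorption work---and they produce the same $\tau_C$ and $\alpha_{n+1}$. Your version is somewhat more elementary in that it never needs a lower bound for a perturbed operator, only $\|{\bf M}^{-1}\|_\infty$, which is trivial because ${\bf M}$ is the diagonal lumped mass matrix; the paper's symmetric splitting keeps the operator ${\bf M}+\tfrac{\tau}{2}(\tilde A_1+\tilde A_2)$ intact on the left, a form that would survive in settings where the mass matrix is not diagonal or where one wants to exploit structural (e.g.\ M-matrix) properties of the sum rather than invert ${\bf M}$. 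Your two supporting checks---that the hypothesis of Lemma \ref{tildeC-P-l} supplies \eqref{lp-l}, and that $C_\Phi^{\tilde A}$ is finite by continuity of $\tilde A$ and $\phi^{n+1}$ on the compact convex set $\mathcal{\tilde C}^{n+1}$---are exactly the ones the paper relies on as well.
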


\begin{proof}
	Using~\eqref{EAFE-non-dis} and \eqref{EAFE-non-2-AP}, we obtain
	$$
	({\bf M}+\tau\tilde A(P^{n+1,l}))\phi^{n+1}(P^{n+1,l})=F^n=({\bf M}+\tau\tilde A(P^{n+1,l-1}))\phi^{n+1}(P^{n+1,l-1}),
	$$
	which becomes
{\small\begin{eqnarray}\notag
	{\bf M}(\phi^{n+1}(P^{n+1,l})-\phi^{n+1}(P^{n+1,l-1}))&+&\frac{\tau}{2} \big(\tilde A(P^{n+1,l})\phi^{n+1}(P^{n+1,l})-\tilde A(P^{n+1,l-1})\phi^{n+1}(P^{n+1,l-1})\big)
	\\&=&\frac{\tau}{2}\big(\tilde A(P^{n+1,l-1})\phi^{n+1}(P^{n+1,l-1})-   \tilde A(P^{n+1,l})\phi^{n+1}(P^{n+1,l})\big).
	\label{3-1-3'c11-l}
	\end{eqnarray}}

	From \eqref{3-1-3'c11-l}, it yields
	\begin{eqnarray}\notag
	[{\bf M}+ \frac{\tau}{2} (\tilde A(P^{n+1,l})& +&  \tilde A(P^{n+1,l-1}))](\phi^{n+1}(P^{n+1,l})- \phi^{n+1}(P^{n+1,l-1}))
	\\&=&\frac{\tau}{2} ((\tilde A(P^{n+1,l-1})-   \tilde A(P^{n+1,l}))(\phi^{n+1}(P^{n+1,l})+   \phi^{n+1}(P^{n+1,l-1}))).\label{3-1-3'c1-l}
	\end{eqnarray}
	Denoting by $ \delta P:=\phi^{n+1}(P^{n+1,l})- \phi^{n+1}(P^{n+1,l-1})$, from the definitions of ${\bf M}$, $C_{\Phi}^{\tilde A} $ and $\tau_C$ given by \eqref{C4-M-def-2}, \eqref{C-Phi-tildeA-l} and \eqref{def-tauC-l}, respectively,  we have if setting $C_L\ge 1$ and $\tau \le \tau_C$ , the left side of  \eqref{3-1-3'c1-l} can be estimated as follows
	{\small\begin{eqnarray}\notag
		\|[{\bf M}+ \frac{\tau}{2} (\tilde A(P^{n+1,l}) +  \tilde A(P^{n+1,l-1}))]  \delta P\|_{\infty}
		&\ge &\|{\bf M} \delta P\|_{\infty}-\frac{\tau}{2}\| (\tilde A(P^{n+1,l}) +  \tilde A(P^{n+1,l-1})) \delta P \|_{\infty}
		\notag\\&\ge& \frac{1}{4}\min_{k}|\Omega_k|\|  \delta P\|_{\infty}-\frac{\tau}{2} \|  \tilde A(P^{n+1,l})  +  \tilde A(P^{n+1,l-1})  \|_{\infty}\| \delta P \|_{\infty}
		\notag\\&\ge &\frac{1}{4}\min_{k}|\Omega_k|\|  \delta P\|_{\infty}- {\tau}C_{\Phi}^{\tilde A} \| \delta P \|_{\infty}
		\notag\\&\ge &\frac{1}{8}\min_{k}|\Omega_k|\| \delta P \|_{\infty}
		.\label{3.23-l}
		\end{eqnarray}}
	On the other hand, from Lemma \ref{tildeC-P-l} and \eqref{C-Phi-tildeA-l}, the right side of \eqref{3-1-3'c1-l}  can be written as
	\begin{eqnarray}\notag
	&&\frac{\tau}{2}\| (\tilde A(P^{n+1,l-1})-   \tilde A(P^{n+1,l}))(\phi^{n+1}(P^{n+1,l})+   \phi^{n+1}(P^{n+1,l-1}))\|_{\infty}
	\\&\le& {\color{black}\frac{\tau}{2}\| (\tilde A(P^{n+1,l-1})-   \tilde A(P^{n+1,l})\|_{\infty}\|\phi^{n+1}(P^{n+1,l})+   \phi^{n+1}(P^{n+1,l-1}) \|_{\infty}}
	\notag\\&\le& \tau  C_{\Phi}^{\tilde A}\| (\tilde A(P^{n+1,l-1})- \tilde A(P^{n+1,l})\|_{\infty}
	\notag\\&\le&  {\tau} C_{\Phi}^{\tilde A}C_L \|  P^{n+1,l}-P^{n+1,l-1} \|_{\infty}
	.
	\label{3.21-l}
	\end{eqnarray}
Inserting \eqref{3.23-l} and \eqref{3.21-l} into \eqref{3-1-3'c1-l}, it follows that
	\begin{eqnarray*}
		\frac{1}{8}\min_{k}|\Omega_k| ~\|\phi^{n+1}(P^{n+1,l})- \phi^{n+1}(P^{n+1,l-1})\|_{\infty}
		&\le& {\tau} C_L C_{\Phi}^{\tilde A}\| P^{n+1,l}-P^{n+1,l-1}\|_{\infty},
	\end{eqnarray*}
that is
	\begin{eqnarray*}
		\|\phi^{n+1}(P^{n+1,l})- \phi^{n+1}(P^{n+1,l-1})\|_{\infty}
		\le\alpha_{n+1}\| P^{n+1,l}-P^{n+1,l-1} \|_{\infty},~\alpha_{n+1}:=\frac{8  C_L  C_{\Phi}^{\tilde A} }{\min\limits_{k}|\Omega_k|}{\tau}.
	\end{eqnarray*}
Hence $\alpha_{n+1} \in(0,1)$ when $\tau<\tau_C$, which implies \eqref{contraction-l}, i.e. {\color{black}  $\phi^{n+1}$ is contractive on $\mathcal {\tilde C}^{n+1}$}. We complete the proof of this lemma.
\end{proof}

By using the above lemma, we have the following theorem.
\begin{theorem}\label{theorem}
Assume~\eqref{cytj}, \eqref{pn} and \eqref{Fi} hold,  $\tau_{*,n-1}$ is defined in \eqref{FJge0} and $\{P^{n,l}\}$ is the Gummel iteration sequence given by  \eqref{EAFE-non-2-dis}. For any given time step $t=t^{n},n\in\{1,\ldots,N\}$ and any positive constant $\tilde\epsilon$ independent of $\{P^{n,l}\}$, if $\tau<\tau_{*,n-1}$, then the sequence $\{P^{n,l}\}$ converges, i.e. there exists a positive constant $L_n$, such that  $\{P^{n,l},l=0,1,\ldots,L_n\}$ satisfies 
	\begin{align}\label{tol-dis}
	\|P^{n,l+1}-P^{n,l}\|_\infty\leq \tilde\epsilon,
	\end{align}
and the element of $P^{n,l}$ satisfies
	
	\begin{equation}\label{phiP-1}
	0<p^{i,n,l}_{ k}\le {\tilde C}^{n}_k,~\forall k\in\{1,\ldots, n_h\},i\in\{1,2\},~l=0,1,\ldots,L_n.
	\end{equation}
\end{theorem}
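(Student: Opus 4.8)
The plan is to prove the two assertions of the theorem in the order in which they become available: first the uniform bounds \eqref{phiP-1}, which amount to the invariance of the Gummel map on the set $\mathcal{\tilde C}^{n}$, and then the termination estimate \eqref{tol-dis}, which follows from the contraction already isolated in Lemma \ref{le-infty-l}. This order is forced by the structure of the preceding lemmas: both Lemma \ref{tildeC-P-l} and Lemma \ref{le-infty-l} presuppose that the entire iteration sequence lies in $\mathcal{\tilde C}^{n}$ (hypothesis \eqref{assume}), so the bounds must be established first and independently of the contraction, in order that the constants $C_L$ and $C_{\Phi}^{\tilde A}$ of \eqref{lp-l} and \eqref{C-Phi-tildeA-l} be legitimately defined.

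For the invariance I would argue by induction on the iteration index $l$, reusing the two mechanisms from the existence theory. The base case $l=0$ is immediate: $P^{n,0}=P^{n-1}$ is the previous time-step solution, which by Theorem \ref{th-1} is positive and lies in $\mathcal C^{n-1}$, i.e. $p^{i,n-1}_{k}\le C^{n-1}_{k}$; by the nesting \eqref{tildeC-C} this gives $P^{n,0}\in\mathcal{\tilde C}^{n}$. For the inductive step, write the iteration \eqref{EAFE-non-2-dis} as $A(P^{n,l-1})P^{n,l}=F^{n-1}$. Positivity $p^{i,n,l}_{k}>0$ follows exactly as in \eqref{PJge0}: condition \eqref{cytj} makes $A(P^{n,l-1})$ a column M-matrix (Lemma \ref{corollary-2}), so its transpose inverse is nonnegative, while $F^{n-1}>0$ by Lemma \ref{lemma1} since $\tau<\tau_{*,n-1}$ and $P^{n-1}>0$. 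The upper bound repeats \eqref{C4-eq1.39-J}--\eqref{CJ}: applying the row vector $E$ to $A(P^{n,l-1})P^{n,l}=F^{n-1}$ yields $C_{n-1}=E(\mathbf{M}+\tau\tilde A(P^{n,l-1}))P^{n,l}$, and since the column sums of $\tilde A$ vanish while $P^{n,l}>0$, the term $E\tilde A(P^{n,l-1})P^{n,l}>0$ may be discarded to give $C_{n-1}>\sum_{j}\tfrac{|\Omega_j|}{4}(p^{1,n,l}_{j}+p^{2,n,l}_{j})$, whence $p^{i,n,l}_{k}\le C^{n}_{k}\le\tilde C^{n}_{k}$. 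This closes the induction and proves \eqref{phiP-1}.

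Having placed every iterate in $\mathcal{\tilde C}^{n}$, hypothesis \eqref{assume} is satisfied, so Lemma \ref{tildeC-P-l} supplies the Lipschitz constant $C_L$ and Lemma \ref{le-infty-l} supplies the contraction \eqref{contraction-l} with factor $\alpha_n\in(0,1)$, valid once $\tau<\tau_C$. Writing $P^{n,l+1}=\phi^{n}(P^{n,l})$ and iterating \eqref{contraction-l} gives $\|P^{n,l+1}-P^{n,l}\|_{\infty}\le\alpha_n^{\,l}\|P^{n,1}-P^{n,0}\|_{\infty}$. Because $\alpha_n<1$, the right-hand side tends to $0$, so one takes $L_n$ to be the least index with $\alpha_n^{\,L_n}\|P^{n,1}-P^{n,0}\|_{\infty}\le\tilde\epsilon$; this furnishes \eqref{tol-dis}, and since $\{P^{n,l}\}_{l=0}^{L_n}\subset\mathcal{\tilde C}^{n}$ by the previous step, \eqref{phiP-1} holds on the truncated sequence as well.

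The main obstacle is the bookkeeping of the two smallness thresholds on $\tau$ and the circularity latent in the constants. The positivity of $F^{n-1}$, and through it the whole invariance argument, needs $\tau<\tau_{*,n-1}$, whereas $\alpha_n<1$ only for $\tau<\tau_C$; a fully rigorous statement therefore requires $\tau<\min\{\tau_{*,n-1},\tau_C\}$, and one must confirm that $\tau_C$ of \eqref{def-tauC-l} is a genuine positive threshold. The delicate point is that $\tau_C$ is built from $C_L$ and $C_{\Phi}^{\tilde A}$, which are themselves extracted from Lemmas \ref{tildeC-P-l} and \ref{le-infty-l} under the very hypothesis \eqref{assume} that the invariance provides; in particular $C_L$ rests on the uniform bound \eqref{tildeC1-l}. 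Arranging the proof so that the invariance is established strictly before the contraction is invoked is precisely what dissolves this circularity, and it is the step requiring the most care.
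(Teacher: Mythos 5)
Your within--time-step argument (induction on $l$ for positivity and the upper bound via the column M-matrix property and the row vector $E$, then the contraction of Lemmas \ref{tildeC-P-l} and \ref{le-infty-l} to terminate the iteration) is essentially the paper's, and you are right on two delicate points: the invariance must be established before the contraction is invoked so that $C_L$ and $C_{\Phi}^{\tilde A}$ are legitimately defined, and the theorem as stated tacitly also needs $\tau<\tau_C$, not only $\tau<\tau_{*,n-1}$. The genuine gap is your anchor for the base case: you take $P^{n,0}=P^{n-1}$ to be ``the previous time-step solution, which by Theorem \ref{th-1} is positive and lies in $\mathcal C^{n-1}$.'' Theorem \ref{th-1} asserts the existence of an exact solution of the nonlinear system \eqref{UphiJJ}; it says nothing about the vector the Gummel iteration actually starts from, which by Algorithm \ref{gummalgo} and the iteration \eqref{EAFE-non-2-dis} is the \emph{terminal Gummel iterate} $P^{n-1,L_{n-1}}$ of the previous time step --- an approximate solution within tolerance $\tilde\epsilon$, not the fixed point whose existence Theorem \ref{th-1} guarantees. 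In particular, the two properties your $l$-induction consumes at time $t^n$ --- a strictly positive lower bound $C_{p^{n-1}}$ for $P^{n-1}$ (needed by Lemma \ref{lemma1} to conclude $F^{n-1}>0$) and the bound $p^{i,n-1}_k\le\tilde C^{n-1}_k$ (needed to start the invariance) --- are not supplied by Theorem \ref{th-1} for this vector, so as written your proof has a hole at every time step $n\ge 2$.

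The repair is an outer induction over the time steps, which is exactly how the paper proceeds: at $t=t^1$ one proves $0<p^{i,1,l}_k\le\tilde C^1_k$ for all iterates and a positive lower bound \eqref{p1-33} for the terminal iterate $P^1=P^{1,L_1}$; then, assuming \eqref{phiP-m} and \eqref{pJ-3} at $t=t^m$, one sets $P^{m+1,0}=P^{m,L_m}$, uses the nesting $\tilde C^m_k\le\tilde C^{m+1}_k$ from \eqref{set-JJ-3} for the new base case, and repeats the $l$-induction and contraction argument. Your single-time-step argument, applied recursively from $n=1$, generates precisely these induction hypotheses, so the flaw is structural rather than conceptual --- but the appeal to Theorem \ref{th-1} must be replaced by carrying the positivity and boundedness of the terminal Gummel iterates as hypotheses of a time-step induction.
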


\begin{proof}
Since the operator $\phi^{m}(m=1,\ldots, N)$ in  \eqref{EAFE-non-2-dis} and \eqref{UphiJJ}  are the same, we can show Theorem \ref{theorem} by the mathematical induction and following the arguments of Theorem \ref{th-1} and Lemma \ref{lemma2}.
	
	First we prove \eqref{phiP-1}  holds on $t=t^1$, i.e.
	\begin{equation}\label{phiP-2}
 0<p^{i,1,l}_{ k}\le {\tilde C}^{1}_k,~\forall k\in\{1,\cdots, n_h\},i\in\{1,2\},~l=0,1,\cdots.
\end{equation}
From \eqref{pn} and ${\tilde C}_k^{0}:=\|P^0\|_\infty$, it is easy to get \eqref{phiP-2} when $l=0$. Next, we only show  \eqref{phiP-2} for the case $l=1$ as an example, since it is similar for $l\ge 2$. Noting \eqref{pn} is the case $J=0$ in \eqref{pn-J} and using \eqref{Fi}, we get \eqref{FJge0} form Lemma \ref{lemma1}. From~\eqref{EAFE-non-2-dis}, we have~$ P^{1,1}$ satisfies
	\begin{equation}\label{eqJJ-3}
	A (P^{1,0})  P^{1,1} =F^0.
	\end{equation}
	Following the arguments in \eqref{PJge0}, and from \eqref{cytj}
	 and \eqref{FJge0} (i.e. $F^0>0$), we obtain the element of $P^{1,1}$ satisfies
	\begin{equation}\label{PJge0-3}
  p^{i,1,1 }_{k}= ( B^TF^0)_k>0,~ k=1,\ldots, {n_h},i=1,2,
\end{equation}
	Similar to the deduction of \eqref{CJ} 
	,
	using \eqref{cytj} and \eqref{PJge0-3}, we have $E\tilde  A(P^{1,0} )  P^{1,1}> 0$, which combining with  \eqref{eqJJ-3}, \eqref{PJge0-3}, \eqref{EAFE-non-dis}  and \eqref{def-CJ} yields
	\begin{eqnarray*}
C_0=EF^0=E({\bf M}+\tau \tilde  A(P^{1,0})) P^{1,1} >
 E {\bf M} P^{1,1} >  \frac{C_0}{C^{1}_k} p^{i,1,1 }_{k},~\forall k\in\{1,\ldots, n_h\},i\in\{1,2\}.
 \end{eqnarray*}
Then from \eqref{tildeC-C} we have $p^{i,1,1}_{ k}<{C^{1}_k}\le {\tilde C}^{1}_k$. Combining \eqref{PJge0-3}, we get \eqref{phiP-2}.
	
	Next, we show \eqref{tol-dis} when $t=t^1$. From  \eqref{phiP-2}, we know the assumption of Lemma \ref{lemma1} holds when $n=1$. Then from Lemma  \ref{lemma1}, \eqref{EAFE-non-2-dis}  and \eqref{phiP-2}, we obtain
	\begin{eqnarray}\notag
	\| P^{1,l+1} - P^{1,l} \|_{\infty}&=&\|\phi^{1}(P^{1,l})-\phi^{1}(P^{1,l-1})\|_{\infty}
	\notag\\&\le& \alpha_1\| P^{1,l}-  P^{1,l-1} \|_{\infty}
	\notag\\&\le& \alpha_1^{l}\| P^{1,1}-  P^{1,0}\|_{\infty}\le  2\alpha_1^{l} \max_k\{{\tilde C}_k^{1}\},
	\label{ys},
	\end{eqnarray}
	 Hence we have \eqref{tol-dis} when $t=t^1$, and the minimum positive constant which satisfies \eqref{tol-dis} is $L_1=\lceil  \log_{\alpha_{1}}\frac{\tilde\epsilon}{2\max\limits_k\{{\tilde C}_k^{1}\}} \rceil$. Denoting by $P^{1}=P^{1,L_1}$, and taking $l=L_1$ in ~\eqref{phiP-2}, it follows that there exists a positive constant ~$C_{p^{1}}$ such that the element of $ P^{1}$ satisfies
	\begin{equation}\label{p1-33}
C_{p^{1}}:=\min_{k\in\{1,\ldots, n_h\},i= 1,2 }  p^{i,{1} }_k   >0.
\end{equation}

Next, for the time step $t=t^m(m\ge 1)$,  assume there exists a positive constant $L_{m}=\lceil  \log_{\alpha_{m}}\frac{\tilde \epsilon}{2\max\limits_k\{{\tilde C}_k^{m}\}} \rceil$ when
$\tau<\tau_{*,m-1}$,  such that the Gummel iterative sequence $\{P^{m,l},l=0,1,\ldots,L_m\}$ satisfies the convergence codition \eqref{tol-dis} and
	\begin{equation}\label{phiP-m}
 0<p^{i,m,l}_{ k}\le {\tilde C}^{m}_k,~\forall k\in\{1,\ldots, n_h\},i\in\{1,2\},~l=0,1,\ldots,L_m,
\end{equation}
and there is a constant $C_{p^{m}}$ independent of $P^{m}:=P^{m,L_m}$ 
satisfies
	\begin{equation}\label{pJ-3}
	C_{p^{m}}:=\min_{k\in\{1,\ldots, n_h\},i= 1,2 }  p^{i,{m}}_k  >0.
	\end{equation}
	
Now we show
	\eqref{tol-dis}-\eqref{pJ-3} when $m:=m+1$.
	Setting $P^{m+1,0}=P^{m,L_m}$, taking $l=L_m$ in \eqref{phiP-m} and from \eqref{set-JJ-3}, we have
	\begin{equation}\label{phiP-2m+1-0}
0<p^{i,m+1,0}_{ k} =p^{i,m,L_m}_{ k}\le {\tilde C}^{m}_k \le {\tilde C}^{m+1}_k,~\forall k\in\{1,\ldots, n_h\},i\in\{1,2\}.
\end{equation}
Since the assumption of Lemma \ref{lemma1} holds when $J=m$ by \eqref{pJ-3},  combining \eqref{Fi},  using Lemma \ref{lemma1} we have \eqref{FJge0} when $J=m$. Following the deduction of \eqref{phiP-2} for  $l\ge 1$ when $t=t^1$, it yields
	\begin{equation}\label{phiP-2m+1}
 0<p^{i,m+1,l}_{ k}\le {\tilde C}^{m+1}_k,~\forall k\in\{1,\cdots, n_h\},i\in\{1,2\},~l=1,\cdots,
\end{equation}
	which combining with \eqref{phiP-2m+1-0} and  \eqref{phiP-2m+1}, we get
	\begin{equation}\label{phiP-2m+1-01}
 0<p^{i,m+1,l}_{ k}\le {\tilde C}^{m+1}_k,~\forall k\in\{1,\cdots, n_h\},i\in\{1,2\},~l=0,1,\cdots.
\end{equation}
	Further, since \eqref{phiP-2m+1-01} is equivalent to Assumption \eqref{assume} when $n=m+1$, similar to the deduction of ~\eqref{ys}, we have
	\begin{eqnarray}
	\| P^{m+1,l+1} - P^{m+1,l} \|_{\infty} \le  2\alpha_{m+1}^{l} \max_k\{{\tilde C}_k^{m+1}\}. \label{ys-m+1}
	\end{eqnarray}
	 Hence  we get \eqref{tol-dis} on $t=t^{m+1}$ and the minimum positive  constant satisfying
	 \eqref{tol-dis} is $L_{m+1}=\lceil  \log_{\alpha_{m+1}}\frac{\tilde \epsilon}{2\max\limits_k\{{\tilde C}_k^{m+1}\}} \rceil$ .
Setting $P^{m+1}=P^{m+1,L_{m+1}}$ and taking $l=L_{m+1}$ in ~\eqref{phiP-2m+1-01}, we have there exists a constant~$C_{p^{m+1}}$ such that the element of $ P^{m+1}$ satisfies
	\begin{equation}\label{p1-33}
C_{p^{m+1}}:=\min_{k\in\{1,\ldots, n_h\},i= 1,2 }  p^{i,m+1}_k
>0.
\end{equation}
Then using \eqref{phiP-2m+1-01}-\eqref{p1-33}, it follows that the Gummel iterative sequence $\{P^{m+1,l},l=0,1,\ldots,L_{m+1}\}$ satisfies the convergence condition \eqref{tol-dis} when $m:=m+1$, and  ~\eqref{phiP-m} and \eqref{pJ-3} hold.
	We complete the proof of this theorem by the mathematical induction.
\end{proof}

\begin{defi}
The sequence $\{P^{n,l}\}$ is convergent, referring to there existence of $P ^ {n, *} \in R ^ {2n_h} $, such that
\begin{equation} 
\lim\limits_{l\rightarrow\infty}P^{n,l}=P^{n,*}.
\end{equation}
\end{defi}

From Theorem \ref{theorem}, it is easy to obtain the following  corollary.
\begin{coro}
Under the condition of  Theorem \ref{theorem}, for any given time step $t=t^{n},n\in\{1,\ldots,N\}$, the Gummel iteration sequence $\{P^{n,l}\}$ is convergent.
\end{coro}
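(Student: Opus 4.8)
The plan is to upgrade the finite-termination statement \eqref{tol-dis} of Theorem \ref{theorem} into genuine convergence of the infinite sequence $\{P^{n,l}\}$ by invoking the contraction estimate of Lemma \ref{le-infty-l} together with the standard Cauchy-sequence argument behind the Banach fixed point theorem. The point is that all the analytic ingredients are already available: the conclusion \eqref{phiP-1} of Theorem \ref{theorem} guarantees that the whole iteration sequence stays in the compact convex box $\mathcal{\tilde C}^{n}$, which is exactly the hypothesis \eqref{assume} needed to invoke Lemma \ref{tildeC-P-l} and hence Lemma \ref{le-infty-l}. Consequently the Gummel map $\phi^{n}$ of \eqref{EAFE-non-2-dis} is contractive on $\mathcal{\tilde C}^{n}$ with factor $\alpha_{n}\in(0,1)$, provided $\tau$ is small enough.

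First I would record the contraction in the consecutive-iterate form $\|P^{n,l+1}-P^{n,l}\|_\infty=\|\phi^{n}(P^{n,l})-\phi^{n}(P^{n,l-1})\|_\infty\le\alpha_{n}\|P^{n,l}-P^{n,l-1}\|_\infty$, which is just \eqref{contraction-l} applied at two successive iterates. Iterating this inequality downward produces the geometric bound $\|P^{n,l+1}-P^{n,l}\|_\infty\le\alpha_{n}^{l}\|P^{n,1}-P^{n,0}\|_\infty$, precisely as was already derived in \eqref{ys} and \eqref{ys-m+1} within the proof of Theorem \ref{theorem}.

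Next I would turn this geometric decay of successive differences into the Cauchy property. For any indices $m>l$, a telescoping sum and the triangle inequality give
\[
\|P^{n,m}-P^{n,l}\|_\infty\le\sum_{j=l}^{m-1}\|P^{n,j+1}-P^{n,j}\|_\infty\le\frac{\alpha_{n}^{l}}{1-\alpha_{n}}\,\|P^{n,1}-P^{n,0}\|_\infty,
\]
and since $\alpha_{n}\in(0,1)$ the right-hand side tends to $0$ as $l\to\infty$, uniformly in $m$. Because $(\mathbb{R}^{2n_h},\|\cdot\|_\infty)$ is complete, $\{P^{n,l}\}$ therefore converges to some limit $P^{n,*}\in\mathbb{R}^{2n_h}$, which is exactly the claim of the corollary. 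Passing to the limit in \eqref{EAFE-non-2-dis} and using the continuity of $\phi^{n}$ (noted together with Lemma \ref{lemma2}) would in addition identify $P^{n,*}$ as the fixed point $\phi^{n}(P^{n,*})=P^{n,*}$, i.e. the exact solution of the nonlinear system at the time level $t^{n}$.

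Since the genuinely hard work --- the Lipschitz bound of Lemma \ref{tildeC-P-l}, the contraction of Lemma \ref{le-infty-l}, and the uniform confinement \eqref{phiP-1} of all iterates to a single fixed compact set --- has already been carried out, this corollary is essentially a bookkeeping step, and I expect no serious obstacle. The only point that warrants a line of care is the smallness of $\tau$: the confinement and positivity rely on $\tau<\tau_{*,n-1}$ from Theorem \ref{theorem}, while the strict contraction $\alpha_{n}<1$ requires $\tau<\tau_C$ from \eqref{def-tauC-l}. I would therefore understand ``the condition of Theorem \ref{theorem}'' to include $\tau<\min\{\tau_{*,n-1},\tau_C\}$, so that both mechanisms are active simultaneously.
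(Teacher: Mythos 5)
Your proof is correct and follows exactly the route the paper intends: the paper states this corollary without an explicit proof (``it is easy to obtain''), relying on the geometric decay of successive differences already established in \eqref{ys} and \eqref{ys-m+1}, and your telescoping/Cauchy argument in the complete space $(\mathbb{R}^{2n_h},\|\cdot\|_\infty)$ is precisely the missing bookkeeping. Your remark that strict contraction additionally requires $\tau<\tau_C$ from \eqref{def-tauC-l}, so that ``the condition of Theorem \ref{theorem}'' should be read as $\tau<\min\{\tau_{*,n-1},\tau_C\}$, is a careful and accurate reading of a hypothesis the paper leaves implicit.
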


\subsection{Numerical experiment}
In this subsection, we present a numerical experiments to verify the contraction and convergence of the Gummel iteration. To implement the algorithms, the code is written in
Fortran 90 and all the computations are carried out on the computer with 32-core
187 GB RAM PowerEdge T640.

\begin{exam}
Consider the following dimensionless time-dependent PNP equations in semiconductor area (cf. {\cite{wang2021stabilized}} for the steady-state form):
\begin{equation}\label{cont-new-model}
\left\{
  \begin{array}{llll}
&-\Delta u- ( p-n)=F_1 , & & \text { in } \Omega, \\
&{\partial_t p }-\nabla \cdot\left(\nabla p +c_{\lambda}  p  \nabla u\right) =F_2,  & & \text { in } \Omega, \\
&{\partial_t n}-\nabla \cdot\left(\nabla n-c_{\lambda} n \nabla u\right) =F_3,  & & \text { in } \Omega,\end{array}
\right.
\end{equation}
 with the initial and boudary value conditions
 \begin{equation}\label{boundary-1}
\left\{
\begin{array}{l}
 u=g_u,\text{on}\;\partial\Omega,~\text{for}~t\in(0, T],\vspace{0.5mm}\\
 p =g_{p },~n =g_{n },\;\text{on}\;\partial\Omega,~\text{for}~t\in(0, T],\\
 p ( x,0)=0 ,~n( x,0)=0 ,~\text{for}~  x \in \Omega,
\end{array}
\right.
\end{equation}
where~$\Omega=[-\frac{1}{2}, \frac{1}{2}]^3$ and  $c_{\lambda}=0.179$. 
The initial-boundary condition and the right-hand side functions $F_i,~i=1,2,3$ are given from the following exact solution
\begin{equation*}
\left\{
  \begin{array}{ll}
    u =(1-e^{-t}) \cos(\pi x)\cos(\pi y)\cos(\pi z),&\\
    p =\sin(t)3\pi^2(1+\frac{1}{2}\cos(\pi x)\cos(\pi y)\cos(\pi z)), &  \\
    n =\sin(2t)3\pi^2(1-\frac{1}{2}\cos(\pi x)\cos(\pi y)\cos(\pi z)). &  \\
  \end{array}
\right.
\end{equation*}
\end{exam}
\vskip0.3cm
In our computation, we choose the time step $\tau=h^2$ and set the final time $T=0.25$. The tolerance $\epsilon=1.0\time 10^{-6}$, the maximum iteration $M_{iter}$=500 and the direct solver for linear algebraic system is ~pardiso.

From \eqref{EAFE-non-2-dis} and \eqref{contraction-l}, we have the solution $P^{k,l+1}$ of the Gummel iteration satisfies the contraction on $t=t^{k}$, i.e.
\begin{equation}\label{contraction-2}
\| P^{k,l+1} - P^{k,l} \|_{\infty}\le \alpha^{(l)}_{k}\| P^{k,l}-  P^{k,l-1} \|_{\infty},~l=1,\ldots,{L_k-1},~\alpha^{(l)}_{k}=O(\tau).
\end{equation}
Let $\bar\alpha =\frac{1}{L_k-1}\sum\limits_{l=1}^{L_k-1} \alpha_k^{(l)}$, and  $L_k$ is defined by ~\eqref{phiP-1}. The following Table 1 displays the value of $\bar\alpha$ at different $\tau$ by using the Gummel iteration combining with three FE schemes including standard FE (\ref{fulldis2-FEM})-(\ref{fulldis-FEM}), SUPG (\ref{fulldis-SUPG-temp})-(\ref{fulldis2-SUPG-temp}) and EAFE (\ref{fulldis2-EAFE})-(\ref{fulldis-EAFE}) schemes.
\begin{table} [H]
	\caption{the value of~$\bar \alpha$ with $h=\frac{1}{16}$   }
	\centering
	\begin{tabular}{|c|c|c|c|c|c|c|  }
		\hline
		$\tau$ &  $\bar\alpha$-FEM &   rate &  $\bar\alpha$-SUPG &   rate &  $\bar\alpha$-EAFE &   rate \\
		\hline
		$4h^2$ &  1.63E-1 &  --& 1.62E-1 &  -- &1.63E-1 &  --
		\\
		\hline
		$2h^2$ & 8.55E-2	&  1.91& 8.52E-2	&  1.90& 8.53E-2	&  1.91
		\\
		\hline
		$h^2$ &  4.49E-2 &  1.90& 4.47E-2 &  1.90 &4.48E-2 &  1.90
		\\
		\hline
	\end{tabular}
\end{table}
It is shown from the above table that  $\bar\alpha<1$ and $\tilde{\alpha}$ is  linearly dependent on $\tau$, which means the mapping $\phi^{n+1}$ defined by \eqref{EAFE-non-2-dis} is a contraction and coincides with the theoretical result~\eqref{contraction-l}.

 Next, we present the numerical results of Gummel iteration combining with the three FE schemes.
Figures \ref{FEMl2h1}-\ref{EAFEl2h1} show that the errors between the discrete solution $(u_h,~p_h,~n_h)$ and the weak solution $(u,~p,~n)$ in $L^2$ and $H^1$ norms are second-order and first-order reduction, respectively, which indicates the validity of the three FE schemes used to solve the PNP equations (\ref{cont-new-model}).

\begin{figure}[H]
	\centering
	{
		\begin{minipage}{7cm}
			\centering
			\includegraphics[scale=0.48]
			{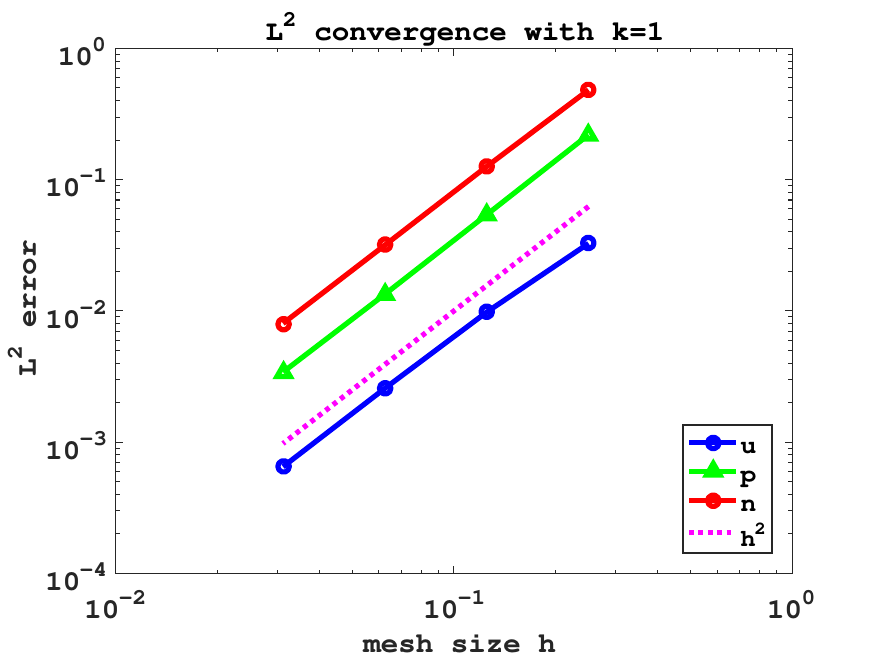}
		\end{minipage}
	}
	{
		\begin{minipage}{7cm}
			\centering
			\includegraphics[scale=0.48]
			{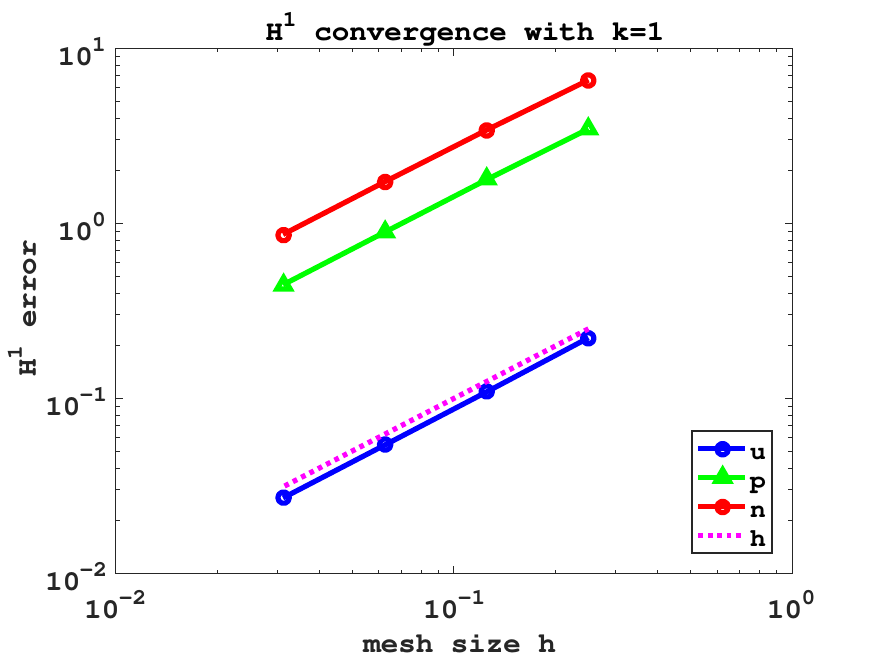}
		\end{minipage}
	}
	
	\caption{h-convergence of FEM scheme  with  $t=0.25$   }
	\label{FEMl2h1}
\end{figure}

\begin{figure}[H]
	\centering
	{
		\begin{minipage}{7cm}
			\centering
			\includegraphics[scale=0.48]
			{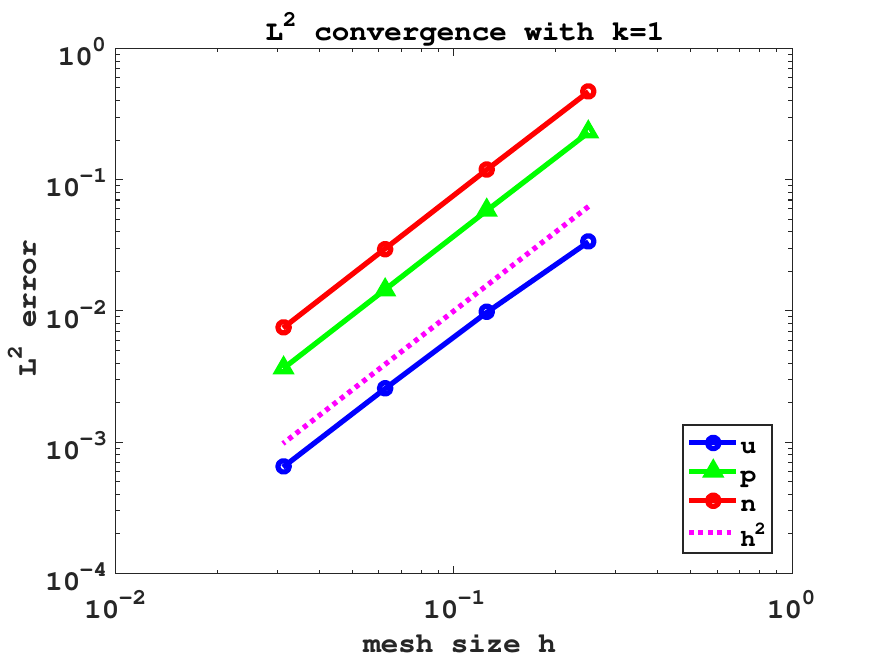}
		\end{minipage}
	}
	{
		\begin{minipage}{7cm}
			\centering
			\includegraphics[scale=0.48]{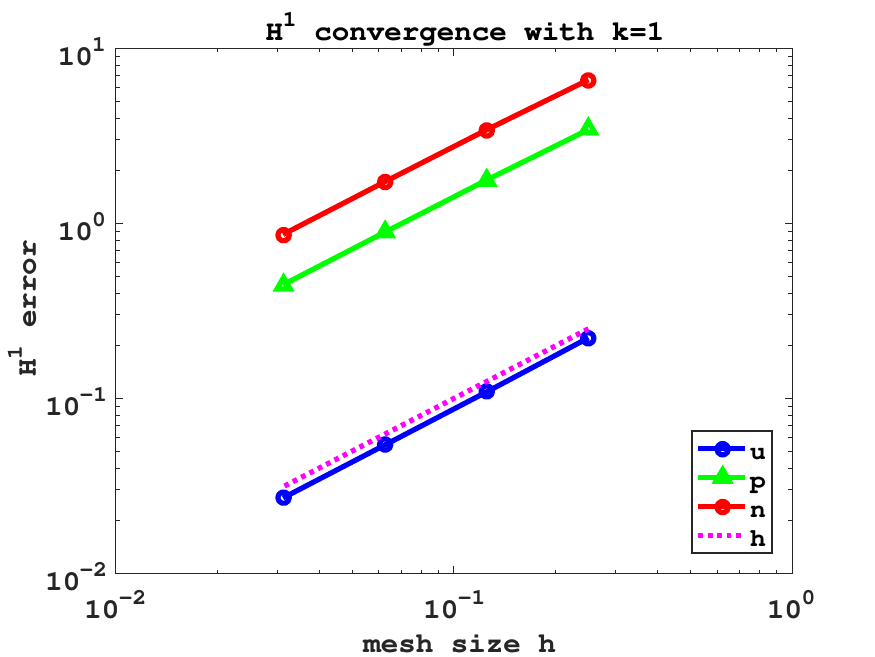}
		\end{minipage}
	}
	
	\caption{h-convergence of SUPG scheme  with  $t=0.25$   }
	\label{SUPGl2h1}
\end{figure}

\begin{figure}[H]
	\centering
	{
		\begin{minipage}{7cm}
			\centering
			\includegraphics[scale=0.48]{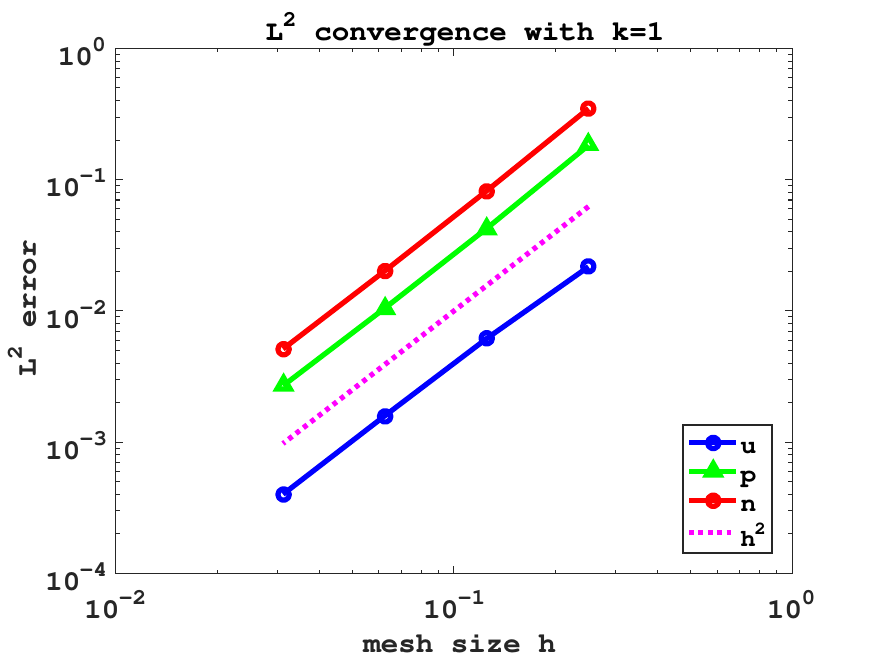}
		\end{minipage}
	}
	{
		\begin{minipage}{7cm}
			\centering
			\includegraphics[scale=0.48]{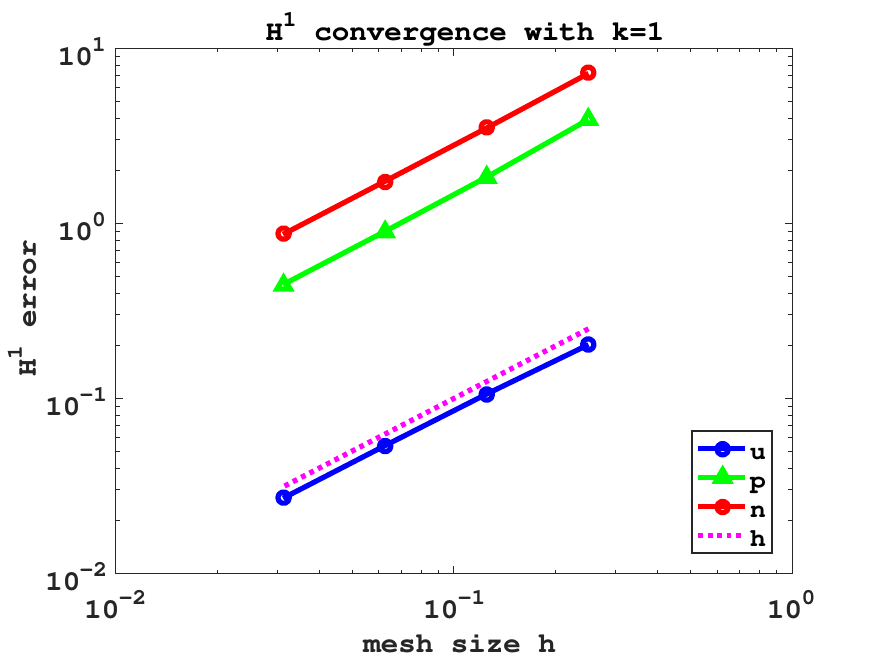}
		\end{minipage}
	}
	
	\caption{h-convergence of EAFE scheme  with  $t=0.25$   }
	\label{EAFEl2h1}
\end{figure}

\vspace{3mm}
\section{Conclusion}
In this paper, we present the theory of the existence of three commonly used FE nonlinear fully discrete solutions and the convergence of the Gummel linearized iteration. The theory of the existence of the solution can be viewed as a framework to FE schemes which only need to satisfy two assumptions. The theory of convergence (including contraction) of the linearized iteration can be easily to generalized to other commonly used iterations such as Picard-Newton iteration. The numerical experiment verifies the result of the theory and also shows the validity of the three FE schemes. Note that although Gummel iteration is effective for some PNP equations, it is not easy to converge for some complex practical PNP problems, for example, the strong convection dominated problem in semiconductor area. In our next upcoming paper, we will design several fast algorithms to improve the convergence efficiency of the iterative algorithm for PNP equations especially for the case with strong convection dominance. \\

\noindent
{\bf Acknowledgement }
  S. Shu was supported by the China NSF (NSFC 12371373). Y. Yang was supported by the China NSF (NSFC 12161026).

\bibliographystyle{unsrt}  
\bibliography{artpnp}

\begin{thebibliography}{10}

\bibitem{nernst1889elektromotorische}
Walther Nernst.
\newblock Die elektromotorische wirksamkeit der jonen.
\newblock {\em Zeitschrift f{\"u}r physikalische Chemie}, 4(1):129--181, 1889.

\bibitem{planck1890ueber}
Max Planck.
\newblock Ueber die erregung von electricit{\"a}t und w{\"a}rme in
  electrolyten.
\newblock {\em Annalen Der Physik}, 275(2):161--186, 1890.

\bibitem{tu2013parallel}
Bin Tu, Minxin Chen, Yan Xie, Linbo Zhang, Bob Eisenberg, and Benzhuo Lu.
\newblock A parallel finite element simulator for ion transport through
  three-dimensional ion channel systems.
\newblock {\em Journal of computational chemistry}, 34(24):2065--2078, 2013.

\bibitem{wang2021stabilized}
Qin Wang, Hongliang Li, Linbo Zhang, and Benzhuo Lu.
\newblock A stabilized finite element method for the
  \text{Poisson--Nernst--Planck} equations in three-dimensional ion channel
  simulations.
\newblock {\em Applied Mathematics Letters}, 111:106652, 2021.

\bibitem{eisenberg1998ionic}
Bob Eisenberg.
\newblock Ionic channels in biological membranes-electrostatic analysis of a
  natural nanotube.
\newblock {\em Contemporary Physics}, 39(6):447--466, 1998.

\bibitem{singer2009poisson}
Amit Singer and John Norbury.
\newblock A {Poisson--Nernst--Planck} model for biological ion channels--an
  asymptotic analysis in a three-dimensional narrow funnel.
\newblock {\em SIAM Journal on Applied Mathematics}, 70(3):949--968, 2009.

\bibitem{markowich1985stationary}
Peter~A Markowich.
\newblock {\em The stationary semiconductor device equations}.
\newblock Springer-Verlag, NewYork, 1986.

\bibitem{van1950theory}
W~Van~Roosbroeck.
\newblock Theory of the flow of electrons and holes in germanium and other
  semiconductors.
\newblock {\em The Bell System Technical Journal}, 29(4):560--607, 1950.

\bibitem{brezzi2005discretization}
F~Brezzi, LD~Marini, Stefano Micheletti, P~Pietra, Riccardo Sacco, and Song
  Wang.
\newblock Discretization of semiconductor device problems \text{(I)}.
\newblock {\em Handbook of numerical analysis}, 13:317--441, 2005.

\bibitem{miller1999application}
Wil Schilders, John James~Henry Miller, and Song Wang.
\newblock Application of finite element methods to the simulation of
  semiconductor devices.
\newblock {\em Reports on Progress in Physics}, 62(3):277, 1999.

\bibitem{xu2018time}
Jingjie Xu, Benzhuo Lu, and Linbo Zhang.
\newblock A time-dependent finite element algorithm for simulations of ion
  current rectification and hysteresis properties of 3d nanopore system.
\newblock {\em IEEE Transactions on Nanotechnology}, 17(3):513--519, 2018.

\bibitem{daiguji2004ion}
Hirofumi Daiguji, Peidong Yang, and Arun Majumdar.
\newblock Ion transport in nanofluidic channels.
\newblock {\em Nano letters}, 4(1):137--142, 2004.

\bibitem{J.W.Jerome1991}
Joseph~W. Jerome and Thomas Kerkhoven.
\newblock A finite element approximation theory for the drift diffusion
  semiconductor model.
\newblock {\em SIAM Journal on Numerical Analysis}, 28(2):403--422, 1991.

\bibitem{lu2007electrodiffusion}
Benzhuo Lu, YC~Zhou, Gary~A Huber, Stephen~D Bond, Michael~J Holst, and
  J~Andrew McCammon.
\newblock Electrodiffusion: \text{A} continuum modeling framework for
  biomolecular systems with realistic spatiotemporal resolution.
\newblock {\em The Journal of chemical physics}, 127(13), 2007.

\bibitem{lu2010poisson}
Benzhuo Lu, Michael~J Holst, J~Andrew McCammon, and YC~Zhou.
\newblock {Poisson--Nernst--Planck} equations for simulating biomolecular
  diffusion--reaction processes {I}: Finite element solutions.
\newblock {\em J. Comput. Phys.}, 229(19):6979--6994, 2010.

\bibitem{xie2020effective}
Dexuan Xie and Benzhuo Lu.
\newblock An effective finite element iterative solver for a
  {Poisson--Nernst--Planck} ion channel model with periodic boundary
  conditions.
\newblock {\em SIAM J. Sci. Comput.}, 42(6):B1490--B1516, 2020.

\bibitem{zhang2022class}
Qianru Zhang, Qin Wang, Linbo Zhang, and Benzhuo Lu.
\newblock A class of finite element methods with averaging techniques for
  solving the three-dimensional drift-diffusion model in semiconductor device
  simulations.
\newblock {\em Journal of Computational Physics}, 458:111086, 2022.

\bibitem{prohl2009convergent}
Andreas Prohl and Markus Schmuck.
\newblock Convergent discretizations for the \text{Nernst--Planck--Poisson}
  system.
\newblock {\em Numerische Mathematik}, 111:591--630, 2009.

\bibitem{yang2013error}
Ying Yang and Benzhuo Lu.
\newblock An error analysis for the finite element approximation to the
  steady-state {Poisson-Nernst-Planck} equations.
\newblock {\em Advances in Applied Mathematics and Mechanics}, 5(1):113--130,
  2013.

\bibitem{sun2016error}
Yuzhou Sun, Pengtao Sun, Bin Zheng, and Guang Lin.
\newblock Error analysis of finite element method for {Poisson--Nernst--Planck}
  equations.
\newblock {\em Journal of Computational and Applied Mathematics}, 301:28--43,
  2016.

\bibitem{yang2020superconvergent}
Ying Yang, Ming Tang, Chun Liu, Benzhuo Lu, and Liuqiang Zhong.
\newblock Superconvergent gradient recovery for nonlinear
  {Poisson-Nernst-Planck} equations with applications to the ion channel
  problem.
\newblock {\em Advances in Computational Mathematics}, 46(6):1--35, 2020.

\bibitem{liu22}
Yang Liu, Shi Shu, Huayi Wei, and Ying Yang.
\newblock A virtual element method for the steady-state {Poisson-Nernst-Planck}
  equations on polygonal meshes.
\newblock {\em Computers {\&} Mathematics with Applications}, 102(15):95--112,
  2021.

\bibitem{degh23}
Mehdi Dehghan, Zeinab Gharibi, and Ricardo Ruiz-Baier.
\newblock Optimal error estimates of coupled and divergence-free virtual
  element methods for the {Poisson-Nernst-Planck/Navier-Stokes} equations and
  applications in electrochemical systems.
\newblock {\em Journal of Scientific Computing}, 94(3):72, 2023.

\bibitem{tu2015stabilized}
Bin Tu, Yan Xie, Linbo Zhang, and Benzhuo Lu.
\newblock Stabilized finite element methods to simulate the conductances of ion
  channels.
\newblock {\em Computer Physics Communications}, 188:131--139, 2015.

\bibitem{SUPG-IP}
Wang Qin, Li~Hongliang, Zhang Linbo, and Lu~Benzhuo.
\newblock A stabilized finite element method for the {Poisson-Nernst-Planck}
  equation in three-dimensional ion channel simulations.
\newblock {\em Applied Mathematics Letters}, 111:106652, 2021.

\bibitem{zhang2021inverse}
Qianru Zhang, Qin Wang, Linbo Zhang, and Benzhuo Lu.
\newblock An inverse averaging finite element method for solving
  three-dimensional \text{Poisson--Nernst--Planck} equations in nanopore system
  simulations.
\newblock {\em The Journal of Chemical Physics}, 155(19), 2021.

\bibitem{mathur2009multigrid}
Sanjay~R Mathur and Jayathi~Y Murthy.
\newblock A multigrid method for the \text{Poisson--Nernst--Planck} equations.
\newblock {\em International Journal of Heat and Mass Transfer},
  52(17-18):4031--4039, 2009.

\bibitem{shen2020decoupling}
Ruigang Shen, Shi Shu, Ying Yang, and Benzhuo Lu.
\newblock A decoupling two-grid method for the time-dependent
  \text{Poisson-Nernst-Planck} equations.
\newblock {\em Numerical Algorithms}, 83:1613--1651, 2020.

\bibitem{gajewski1986basic}
Herbert Gajewski and Konrad Gr{\"o}ger.
\newblock On the basic equations for carrier transport in semiconductors.
\newblock {\em Journal of mathematical analysis and applications},
  113(1):12--35, 1986.

\bibitem{xu1999monotone}
Jinchao Xu and Ludmil Zikatanov.
\newblock A monotone finite element scheme for convection-diffusion equations.
\newblock {\em Mathematics of Computation}, 68(228):1429--1446, 1999.

\bibitem{park1999ninety}
Sehie Park.
\newblock Ninety years of the \text{Brouwer} fixed point theorem.
\newblock {\em Vietnam Journal of Mathematics}, 27(3):187--222, 1999.

\bibitem{zheng2011second}
Qiong Zheng, Duan Chen, and Guo-Wei Wei.
\newblock Second-order \text{Poisson--Nernst--Planck} solver for ion transport.
\newblock {\em Journal of computational physics}, 230(13):5239--5262, 2011.

\end{thebibliography}
\end{document}